\DeclareMathOperator{\GL}{GL}
\DeclareMathOperator{\AGL}{AGL}
\DeclareMathOperator{\Sym}{Sym}
\DeclareMathOperator{\tconv}{conv}
\DeclareMathOperator{\vol}{vol}
\DeclareMathOperator{\tvert}{vert}
\title{Periodic triangulations of $\ZZ^n$}
\def\QuotS#1#2{\leavevmode\kern-.0em\raise.2ex\hbox{$#1$}\kern-.1em/\kern-.1em\lower.25ex\hbox{$#2$}}
\begin{document}

\author[M. D. Sikiri\'c]{Mathieu Dutour Sikiri\'c}
\address{Mathieu Dutour Sikiri\'c, Rudjer Boskovi\'c Institute, Bijeni\v{c}ka 54, 10000 Zagreb, Croatia}
\email{mathieu.dutour@gmail.com}

\author[A. Garber]{Alexey Garber}
\address{Alexey Garber, School of Mathematical \& Statistical Sciences, The University of Texas Rio Grande Valley, 1 West University Blvd, Brownsville, TX, 78520, USA}
\email{alexeygarber@gmail.com}

\thanks{The first author has been supported by the Humboldt Foundation}

\newcommand{\RR}{\ensuremath{\mathbb{R}}}
\newcommand{\NN}{\ensuremath{\mathbb{N}}}
\newcommand{\QQ}{\ensuremath{\mathbb{Q}}}
\newcommand{\CC}{\ensuremath{\mathbb{C}}}
\newcommand{\ZZ}{\ensuremath{\mathbb{Z}}}
\newcommand{\TT}{\ensuremath{\mathbb{T}}}

\newtheorem{theorem}{Theorem}[section]
\newtheorem{proposition}[theorem]{Proposition}
\newtheorem{corollary}[theorem]{Corollary}
\newtheorem{lemma}[theorem]{Lemma}
\newtheorem{problem}[theorem]{Problem}
\newtheorem{conjecture}{Conjecture}
\newtheorem{question}{Question}
\newtheorem{claim}{Claim}
\newtheorem{remark}[theorem]{Remark}
\theoremstyle{definition}
\newtheorem{definition}[theorem]{Definition}

\begin{abstract}
We consider in this work triangulations of $\ZZ^n$ that are periodic along $\ZZ^n$.
They generalize the triangulations obtained from Delaunay tessellations of lattices.
Other important property is the regularity and central-symmetry property of
triangulations.

Full enumeration for dimension at most $4$ is obtained.
In dimension $5$ several new phenomena happen:
there are centrally-symmetric triangulations that are not Delaunay,
there are non-regular triangulations (it could happen in dimension $4$) and
a given simplex has a priori an infinity of possible adjacent simplices.
We found $950$ periodic triangulations in dimension $5$ but finiteness is unknown.

\end{abstract}

\maketitle

\section{Introduction}

Given a positive definite quadratic form $A$ we obtain a tessellation of $\ZZ^n$
by taking the projection of the facets of the convex hull of
$\left\{(x, x^T A x)\mbox{~for~} x\in \ZZ^n\right\}$.
This triangulation is $\ZZ^n$-periodic, centrally symmetric and is called the Delaunay
tessellation \cite{Edelsbrunner_book}.
For dimension at most $5$ those tessellations are classified and there
are $1$, $2$, $5$, $52$ and $110244$ for $1\leq n\leq 5$ \cite{stogrin-1973,ClassifDim5}
up to the action of $\GL_n(\ZZ)$. If one limit oneself to the Delaunay triangulations
formed of simplices then the number of types is $1$, $1$, $1$, $3$ and $222$
\cite{br-1973,rb-1976,eg-2002} for $1\leq n\leq 5$, respectively.
For $n=6$ Baburin and Engel \cite{BE} reported more than 500'000'000 non-equivalent
triangulations.
A triangulation is called {\em regular} if it is obtained as projection of facets
of an infinite convex body of vertices $(x, f(x))$ for $f$ a function defined on $\ZZ^n$.
This generalizes the Delaunay tessellations.

In this paper we consider general triangulations of the point set $\ZZ^n$ which are
invariant under translations by $\ZZ^n$ and are face-to-face.
Such triangulations can be viewed as decomposition of a torus into a cell complex
with one vertex where all cells are simplices.
Other triangulations of the torus into simplices were considered in \cite{GrigisDelaunay,ItohAcute}.
In Section \ref{sec:preliminaries} we consider general results on periodic triangulations of $\ZZ^n$
in particular on groups, simplices and refinement of periodic tilings.
In Section \ref{Sec_Comput_Tools} we detail a number of computational tools for testing Delaunayness
and regularity that we use in this work.

In Section \ref{sec:3dim} we prove that for $n\leq 3$
all such triangulations are Delaunay. For $n=4$ a non-centrally symmetric triangulation
named ``red-triangular'' was described in \cite[Example 5.13.1]{Alexeev_semiabelian}.
We also prove in Section \ref{sec:4dim} that this triangulation together with the Delaunay ones form all the set
of triangulations up to the action of $\GL_4(\ZZ)$.

In dimension $n\geq 5$ full enumeration of periodic triangulation appears to be difficult.
First of all the finiteness is not proved and may not hold since we prove in
Section \ref{sec:dim5} that given a simplex of volume $1$ there are a priori
infinitely many possibilities for an adjacent simplex.

In Section \ref{sect:flip} we obtain $950$ non-isomorphic periodic triangulations of $\ZZ^5$
but we do not know if this is the complete list. This list allow us to prove that there are
centrally symmetric but not Delaunay triangulations and non-regular triangulations.

In \ref{sec:open_questions} we list several open questions on enumeration, extensibility and regularity
properties of periodic triangulations of $\ZZ^n$ that should be of general interest.





\section{General properties of periodic triangulations}\label{sec:preliminaries}

\begin{definition}\label{DEF_kdim_lattice}
A {\em $k$-dimensional lattice} is a discrete subgroup of $\RR^n$ of
rank $k$, i.e. a set of the form $L=\ZZ v_1 + \dots + \ZZ v_k$ with
linearly independent vectors $v_1,\ldots, v_k$.
\end{definition}

Throughout the paper we will work with the lattice $L=\ZZ^n$. The group of affine transformations
preserving a $n$-dimensional lattice is isomorphic to $\AGL_n(\ZZ)$.

\begin{definition}\label{DEF_triangulation}
A {\em partial triangulation} $\mathcal{PT}$ of $\ZZ^n$ is a packing of $\RR^n$ by $n$-dimensional simplices with the vertex set $\ZZ^n$, i.e. representation of $\RR^n$ as a union of countably many simplices with integer vertices such that the intersection of any pair of simplices is a face of both. 

A {\em triangulation} ${\mathcal T}$ is a partial triangulation which is also a tiling.

Triangulation $\mathcal{T}$ is called {\em $\ZZ^n$-periodic}, or just {\em periodic} if $\mathcal{T}+v=\mathcal{T}$ for every $v\in\ZZ^n$.
\end{definition}

\begin{definition}\label{DEF_symmetry_group}
The {\em symmetry group} $\Sym({\mathcal T})$ of a periodic triangulation ${\mathcal T}$ of $\ZZ^n$ is the group of affine transformations of $\RR^n$ preserving $\mathcal T$. 

The group $\Sym({\mathcal T})$ contains $\ZZ^n$ as a normal subgroup of finite index. The quotient $\Sym({\mathcal T}) / \ZZ^n$ is called the {\em point group} $Pt({\mathcal T})$. 

The symmetry group is {\em split} if $\Sym({\mathcal T})$ is a semi-direct product $\ZZ^n \rtimes Pt({\mathcal T})$. This is equivalent to having $Pt({\mathcal T})$ being realized as a subgroup of $\Sym({\mathcal T})$.
\end{definition}



\begin{proposition}
The symmetry group of a periodic triangulation ${\mathcal T}$ is split.
\end{proposition}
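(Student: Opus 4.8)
The plan is to produce an explicit splitting homomorphism $Pt(\mathcal{T}) \to \Sym(\mathcal{T})$, which amounts to choosing, for each coset $g\ZZ^n \in Pt(\mathcal{T})$, a representative fixing a common point. Since $\mathcal{T}$ has a single vertex orbit under $\ZZ^n$ (every integer point is a vertex and they are all translates of the origin), any $g \in \Sym(\mathcal{T})$ maps $0$ to some vertex, i.e. to a lattice point $v \in \ZZ^n$; composing with the translation by $-v$ gives an element of $\Sym(\mathcal{T})$ fixing $0$. So the subgroup $H = \{g \in \Sym(\mathcal{T}) : g(0) = 0\}$ of symmetries fixing the origin surjects onto $Pt(\mathcal{T}) = \Sym(\mathcal{T})/\ZZ^n$. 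It remains to check that $H \cap \ZZ^n = \{\mathrm{id}\}$, which is clear since a nontrivial translation moves $0$; hence $H$ is a complement to $\ZZ^n$ and $\Sym(\mathcal{T}) = \ZZ^n \rtimes H \cong \ZZ^n \rtimes Pt(\mathcal{T})$.

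The one point that requires care, and that I expect to be the main (if modest) obstacle, is the claim that every symmetry of $\mathcal{T}$ sends $0$ to a lattice point. An affine symmetry $g$ permutes the simplices of $\mathcal{T}$, hence permutes their vertex set, and the vertex set of $\mathcal{T}$ is exactly $\ZZ^n$ by Definition~\ref{DEF_triangulation}; therefore $g(\ZZ^n) = \ZZ^n$ and in particular $g(0) \in \ZZ^n$. (Equivalently: $g$ restricted to the vertex set is a bijection of $\ZZ^n$, so its linear part lies in $\GL_n(\ZZ)$ and $g \in \AGL_n(\ZZ)$, which is the fact already recorded after Definition~\ref{DEF_kdim_lattice}.)

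Finally I would note that $H$ is finite — indeed $[\Sym(\mathcal{T}):\ZZ^n] = |Pt(\mathcal{T})| < \infty$ and $H$ maps isomorphically onto this quotient — so the stabilizer of the origin is a finite group of lattice-preserving affine maps realizing the point group, which is precisely the assertion that the extension splits. I would also remark that this argument is special to triangulations of the point set $\ZZ^n$ with a single vertex class; for periodic triangulations with several vertex orbits the analogous extension need not split, which is why the "red-triangular'' example and its kin are interesting.
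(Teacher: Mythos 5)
Your proof is correct and follows essentially the same route as the paper: both arguments reduce to showing that any symmetry preserves the vertex set $\ZZ^n$, hence lies in $\AGL_n(\ZZ)$, so that the stabilizer of the origin is a complement to the translation subgroup. (The paper phrases this by writing $f(x)=Ax+b$ and deducing that $A$ and $b$ are integral from the images of $0,e_1,\dots,e_n$; your version makes the complement $H$ and the verification $H\cap\ZZ^n=\{\mathrm{id}\}$ explicit, which is the same content.)
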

\begin{proof} Let $v_0=0$, $v_1=e_1$, \dots, $v_n=e_n$ with $(e_i)$ the standard basis of $\ZZ^n$. Let $f$ be a symmetry of the lattice.
Define $v'_i = f(v_i)$ for $0\leq i\leq n$ and write the transformation $f$ in matrix form as $Ax + b$.
Then 
\begin{equation*}
  \left(\begin{array}{ccc}
    1    & \dots & 1 \\
    v'_0 & \dots & v'_{n}
  \end{array}\right)
  =
  \left(\begin{array}{cc}
    1 & 0\\
    b & A
  \end{array}\right)
  \left(\begin{array}{ccc}
    1   & \dots & 1 \\
    v_0 & \dots & v_{n}
  \end{array}\right)
\end{equation*}
This equation implies that $A$ and $b$ are integral.
Thus $f$ is the composition of a transformation preserving the origin and an integral translation.
This implies that the symmetry group is split. \end{proof}

\begin{definition}\label{DEF_volume_simplex}
Let $\Lambda$ be a $d$-dimensional lattice with fundamental volume $V$, and let $S$ be a $d$-dimensional simplex with vertices from $\Lambda$. In this case volume of $S$ is $k\cdot\frac{V}{d!}$ for some integer $k$, and we will say that {\it relative volume} of $S$ is $k$.

In the following we will refer to relative volume of $S$ as just volume of $S$, or $\vol(S)$, unless we need to emphasize the dimension.
\end{definition}

\begin{proposition}\label{prop:volume}
Let $S$ be a simplex of an $n$-dimensional periodic triangulation. 

(i) We have the inequality $\vol(S) \leq n!$ .

(ii) If the triangulation is centrally symmetric then
\begin{equation*}
\vol(S) \leq \frac{2^n}{ {2n \choose n} } n! .
\end{equation*}
\end{proposition}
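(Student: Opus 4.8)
I would pass to the torus. The triangulation $\mathcal T$ descends to a triangulation of $\RR^n/\ZZ^n$ with a single vertex and finitely many simplices $\bar S_1,\dots,\bar S_N$; the quotient map is injective on the relative interior of each simplex of $\mathcal T$ (if two interior points of a simplex $S$ of $\mathcal T$ differed by $w\in\ZZ^n\setminus\{0\}$, then $S$ and $S+w$ would share an interior point, forcing $S=S+w$, impossible for a bounded simplex), and distinct $\ZZ^n$-orbits of simplices give disjoint images. Hence the Euclidean volumes of the $\bar S_k$ sum to the volume $1$ of the torus, i.e.\ $\sum_k \vol(\bar S_k)=n!$. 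Since each $\vol(\bar S_k)$ is a positive integer, every one of them is at most $n!$, and $\vol(S)$ equals one of these after translating $S$ into a fundamental domain.

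\textbf{Part (ii), set-up.}
First reduce to the case where $0\in\ZZ^n$ is a center of symmetry and a vertex: central symmetry means $-\mathrm{Id}$ lies in the point group, and by the splitting Proposition it is realized by a symmetry fixing the origin, so $x\mapsto -x$ preserves $\mathcal T$; translating $S$ within its $\ZZ^n$-orbit I may assume $S=\tconv(0,v_1,\dots,v_n)$. Now every point of $\tfrac12\ZZ^n$ is a center of symmetry of $\mathcal T$. Such a point lies in the relative interior of a unique face of $\mathcal T$, and that face is invariant under the corresponding central symmetry, hence is a centrally symmetric simplex, hence a vertex or an edge (with the given point as its midpoint). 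Applying this to the points of $S\cap\tfrac12\ZZ^n$ — each of which, by the face-to-face property, lies on a face of $S$ — shows they are exactly the $n+1$ vertices and the $\binom{n+1}{2}$ edge-midpoints of $S$, and these are pairwise distinct. Equivalently, the dilate $2S$ is a lattice simplex whose only lattice points are the $\binom{n+2}{2}$ ``obvious'' ones $\{2v_i\}\cup\{v_i+v_j\}$, so all its edges have lattice length $2$.

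\textbf{Part (ii), the estimate.}
What remains is a lattice-theoretic bound on the relative volume of such a simplex. Passing to barycentric coordinates on $2S$ identifies the configuration with a lattice $\Lambda\supseteq\ZZ^n$ of index $\vol(S)$ for which $\Lambda$ and $\ZZ^n$ contain the same points in the dilated standard simplex $\{x\in\RR^n_{\ge0}:\sum_i x_i\le 2\}$. This is equivalent to saying that every nonzero coset $g+\ZZ^n$ of $\Lambda/\ZZ^n$ has $\sum_i\{g_i\}>2$ for its representative in $[0,1)^n$; applying this also to $-g$ forces the ``coordinate sum'' of each nonzero element of $H:=\Lambda/\ZZ^n\le(\RR/\ZZ)^n$ to lie in $(2,n-2)$, and in particular to have support at least $5$ (which already recovers that centrally symmetric triangulations in dimension $\le 4$ are unimodular). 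A counting argument over $H$ — in which the total of the coordinate-sum function equals $\tfrac{|H|}{2}\sum_i(1-1/m_i)$, $m_i$ being the order of the $i$-th coordinate projection of $H$ — then yields $|H|\le 2^n n!/\binom{2n}{n}$, i.e.\ $\vol(S)\le \frac{2^n}{\binom{2n}{n}}n!$.

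\textbf{Main obstacle.}
The delicate point is this last estimate: extracting the sharp constant $2^n n!/\binom{2n}{n}$ from the subgroup $H$. Naive averaging of the coordinate-sum function gives a bound of the correct shape only for small $n$; one must use more fully that $H$ is a subgroup, so that the coordinate-sum constraint is stable under addition, presumably through an analysis of the extremal configuration or a sharper lattice-point count in the dilated simplex (equivalently, in the centrally symmetric cross-polytope $\tconv(\pm v_1,\dots,\pm v_n)$, whose volume is $\tfrac{2^n}{n!}\vol(S)$). The earlier steps are routine once the observation ``centers of symmetry lie on the $1$-skeleton'' is in place; the only care needed there is to verify that the $\binom{n+2}{2}$ half-lattice points of $S$ are genuinely distinct and that the barycentric passage is an index-preserving bijection.
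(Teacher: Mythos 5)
Part (i) of your proposal is correct and is exactly the paper's argument: the relative volumes of the finitely many translation classes of simplices sum to $n!$, so each one is at most $n!$; phrasing it on the torus changes nothing.

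Part (ii) contains a genuine gap, which you flag yourself. Your geometric preparation is sound: by the splitting proposition you may assume $x\mapsto -x$ preserves $\mathcal T$, so every point of $\tfrac12\ZZ^n$ is a center of symmetry; each such point lies in the relative interior of a unique face, which must then be a centrally symmetric simplex, i.e.\ a vertex or an edge with that point as midpoint; hence the half-lattice points of $S$ are exactly its vertices and edge midpoints, and the reformulation via a superlattice $\Lambda\supseteq\ZZ^n$ with $\sum_i\{g_i\}>2$ for every nonzero coset is also correct. But the inequality that \emph{is} the statement of (ii), namely $|H|\le 2^n n!/\binom{2n}{n}$, is never derived: your averaging identity only yields $\sum_i(1-1/m_i)>4(1-1/|H|)$, which is vacuous for $n\ge 4$, and the rest of the argument is explicitly conjectural (``presumably through\dots''). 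So the bound is not proved. For what it is worth, the paper does not prove it either --- it defers entirely to \cite[Proposition 14.2.4]{DL} --- but a proof attempt must supply the step. The constant $\binom{2n}{n}$ classically enters not through the subgroup $H$ but through the difference body: for a simplex the Rogers--Shephard inequality is an equality, $\vol_E(S-S)=\binom{2n}{n}\vol_E(S)$, and central symmetry of the triangulation forces the translates $\tfrac12(S-S)+v$, $v\in\ZZ^n$, to have pairwise disjoint interiors, giving $\binom{2n}{n}\vol_E(S)\le 2^n$, i.e.\ $\vol(S)\le \frac{2^n}{\binom{2n}{n}}n!$. Note that your parenthetical cross-polytope $\tconv(\pm v_1,\dots,\pm v_n)$ is strictly smaller than $S-S$ and a packing argument with half of it only recovers part (i).
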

\begin{proof}
If the periodic triangulation is formed by the simplices $S_1$, \dots, $S_p$ and
their $\ZZ^n$ translations then we have the equality
\begin{equation*}
\sum_{i=1}^p \vol(S_i) = n! .
\end{equation*}
from which (i) obviously follows.

The proof of (ii) follows exactly the same arguments as \cite[Proposition 14.2.4]{DL}. \end{proof}

\begin{definition}\label{DEF_ScalP_extension}
Let ${\mathcal T}$ be a periodic tiling of $\ZZ^n$ by polytopes having only integer points as vertices.
Let $A$ be a positive definite quadratic form on $\ZZ^n$.
Then $A$ induces another tiling $Ref_A({\mathcal T})$ of $\ZZ^n$ defined
on each polytope $P\in T$ as projection of the lower facets of 
\begin{equation*}
Scal(P) = \tconv \left\lbrace (x, A[x])\mbox{~for~} x\in \tvert(P)\right\rbrace.
\end{equation*}
\end{definition}

\begin{lemma}\label{Refinement_Theorem}
For a $n$-dimensional periodic tiling ${\mathcal T}$ and a positive definite quadratic form $A$
the following properties hold:

(i) $Ref_A({\mathcal T})$ is a periodic tiling of $\ZZ^n$ which is a refinement of ${\mathcal T}$ .

(ii) If $g\in \GL_n(\ZZ)$ preserves $A$ and belongs to the point group of ${\mathcal T}$ then it belongs to the point group of $Ref_A({\mathcal T})$.

(iii) If $A$ is generic then $Ref_A({\mathcal T})$ is a triangulation
\end{lemma}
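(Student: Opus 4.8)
My plan is to handle the three parts in order, building each on the construction of $Scal(P)$ and standard facts about lower envelopes of lifted point configurations. For part (i), I would first fix a polytope $P\in{\mathcal T}$ and observe that the lower facets of $Scal(P)$ project onto a polyhedral subdivision of $P$ itself: this is the classical regular-subdivision construction, and the point is that every point of $P$ lies below exactly one lower facet (generically) or on the boundary between several. The only extra thing to check beyond the single-polytope statement is that these subdivisions glue face-to-face across a shared face $F=P\cap P'$ of two tiles: since $Scal(F)=\tconv\{(x,A[x]):x\in\tvert(F)\}$ and the lower facets of $Scal(F)$ depend only on the vertices of $F$ and the form $A$, the induced subdivision of $F$ coming from $P$ and the one coming from $P'$ both agree with $Ref_A$ applied to $F$. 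Hence $Ref_A({\mathcal T})$ is a genuine face-to-face tiling, and it refines ${\mathcal T}$ by construction since every new cell sits inside some $P$. Periodicity is immediate: if $v\in\ZZ^n$ then $A[x+v]=A[x]+2(Av)\cdot x + A[v]$, so $Scal(P+v)$ is the image of $Scal(P)$ under the affine map $(x,t)\mapsto(x+v, t+2(Av)\cdot x+A[v])$, which is a \emph{lower-facet-preserving} shear (it is linear in $t$ with positive leading coefficient on $t$); therefore $Ref_A({\mathcal T}+v)=Ref_A({\mathcal T})+v=Ref_A({\mathcal T})$.

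For part (ii), let $g\in\GL_n(\ZZ)$ be in the point group of ${\mathcal T}$ and preserve $A$, i.e. $A[gx]=A[x]$ for all $x$. Then the linear map $(x,t)\mapsto(gx,t)$ on $\RR^{n+1}$ sends $Scal(P)$ to $Scal(gP)$ and carries lower facets to lower facets (it fixes the last coordinate), so it maps the subdivision of $P$ to the subdivision of $gP$. Since $g$ permutes the tiles of ${\mathcal T}$, it permutes the cells of $Ref_A({\mathcal T})$; combined with the translations from $\ZZ^n\subseteq\Sym(Ref_A({\mathcal T}))$ this shows $g$ (as a coset) lies in the point group of $Ref_A({\mathcal T})$.

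For part (iii), I would argue that for a generic choice of $A$ — more precisely, outside a countable union of proper algebraic subvarieties in the space of symmetric forms — every lower facet of every $Scal(P)$, for $P$ ranging over a set of orbit representatives of ${\mathcal T}$ (finitely many, by periodicity), is a simplex. The relevant degeneracy for a fixed $P$ is that $n+1$ of its lifted vertices become affinely dependent, or that a non-simplex face appears in the lower envelope; each such event is a polynomial condition on the entries of $A$ that is not identically zero (one can always perturb $A$ to break it), so it defines a proper subvariety, and a generic $A$ avoids all of them. Then every cell of $Ref_A({\mathcal T})$ is a simplex, i.e. $Ref_A({\mathcal T})$ is a triangulation. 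The main obstacle I anticipate is \emph{not} the genericity bookkeeping in (iii) but the gluing verification in (i): one must be careful that ``$Ref_A$ applied to the face $F$'' is well-defined independently of the ambient tile, which ultimately rests on the fact that the lower envelope of $Scal(P)$, restricted to the face $F$ of $P$, equals the lower envelope of $Scal(F)$ — this uses convexity of $A$ (so that a lifted point over a vertex of $F$ cannot be ``pushed down'' by vertices of $P$ outside $F$) together with $F$ being a face of $P$. Once that is pinned down, the rest is routine.
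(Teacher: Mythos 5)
Your proof is correct and follows essentially the same route as the paper's (much terser) argument: lower envelopes of the lifted vertex sets subdivide each tile, the subdivisions agree on shared faces because the induced subdivision of a face $F$ depends only on $\tvert(F)$ and $A$, periodicity follows because translating by $v\in\ZZ^n$ changes $A[x]$ by an affine term (a lower-facet-preserving shear), and genericity of $A$ forces all cells to be simplices. Your write-up actually supplies details the paper omits, notably the face-gluing verification and the explicit shear for periodicity.
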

\begin{proof}
(i) Let us consider for each polytope $P$ of ${\mathcal T}$ the scaling map used to describe the Delaunay polytopes.
\begin{equation*}
Scal(P) = \tconv \left\lbrace (x, A[x])\mbox{~for~} x\in \tvert(P)\right\rbrace
\end{equation*}
The lower facets of $Scal(P)$ define a tiling of $P$ into polytopes.
It also defines a tiling of the faces of $P$. If a face $F$ of ${\mathcal T}$
is contained into polytopes $P_1$, \dots, $P_m$ then the induced tilings are compatible.
The tiling is periodic since $A[x]$ differ on two different translate of a tile by an affine
term.

(ii) is trivial.

(iii) If $A$ is generic then the tiling induced by $Scal(P)$ is a triangulation which proves the claim.
\end{proof}


\begin{definition}\label{DEF_regularity}
A triangulation $T$ of $\ZZ^n$ is called {\em regular} if there exists
a function $f:\ZZ^n \rightarrow \RR$ such that:
\begin{itemize}
\item The points $(x, f(x))$ are vertices of the convex polyhedron $H(f) = \tconv\{ (x,f(x)) | x\in \ZZ^n\}$ in $\RR^{n+1}$.
\item The simplices of $T$ are orthogonal projections of the facets of $H(f)$ onto $\RR^n$.
\end{itemize}
\end{definition}
Obviously Delaunay triangulations are regular.

\section{Computational tools}\label{Sec_Comput_Tools}

\subsection{Testing Delaunay property}\label{Sec_Comput_Tools_-_Test_Delaunay}
Given a periodic triangulation ${\mathcal T}$ of $\ZZ^n$ we can test if it is Delaunay
in the following way: We determine all facets $F$ of simplices $S$ of ${\mathcal T}$ up to translations.
Any such facet $F$ is contained in exactly two Delaunay simplices $S_1=\tconv(F \cup \{v_1\})$
and $S_2=\tconv(F \cup \{v_2\})$. We then form the inequality $N_{S_1,v_2}(A) \geq 0$ with $N_{S, v}$
being a linear form called the {\em Voronoi regulator} (see \cite{VoronoiII,GenVoronoiTheoryDSV} for details).
The polyhedral cone defined by all such inequalities is called ${\mathcal P}$.
If ${\mathcal P}$ is full dimensional then every quadratic form in the interior of ${\mathcal P}$
induces ${\mathcal T}$ as Delaunay triangulation. Otherwise, it is not a Delaunay triangulation.

\subsection{Adjacency of simplices}\label{Sec_Comput_Tools_-_Test_Adjacency_Simplices}
Suppose we have two simplices $\Delta_1$ and $\Delta_2$ and we want to check if the $\ZZ^n$
translates of $\Delta_1$ and $\Delta_2$ are a priori admissible as parts of a periodic
triangulation of $\ZZ^n$.
That is we want to check that the translates do not intersect in their interior and that the
intersection is always a face of both.
If $F$ is a facet of $\Delta_1$ represented by an inequality $f(x)\geq 0$ with an affine
function $f$ then if we have $f(\Delta_2 + v) < 0$ then there is no intersection.
That is for some facet $f$ we have
$\max_{x\in T_2} f(x) + f(v) < 0$ then there is no intersection.
So, the feasible vectors $v$ are the ones that satisfies $\max_{x\in T_2} f(x + v) \geq 0$
for all facet inequalities $f$ of $\Delta_1$.

This defines a convex body ${\mathcal C}$ and the integer points can be obtained by using
exhaustive enumeration. Then for each integer point $v\in {\mathcal C}$
we check if $\Delta_1 \cap (\Delta_2 + v)$ is $n$-dimensional or not.
If it is not then we check that the intersection is a face of both.

With this method we can find the possible simplices adjacent to a given simplex $\Delta$.
That is for a facet $F$ of $\Delta$ and a point $v\in \ZZ^n$ we consider whether the pair
$\Delta$ and $\tconv(F\cup \{v\})$ is admissible for a periodic tessellation. By iterating
over the facets and vectors $v$ we have a list of possible candidates.
However, we have no method for restricting the set of possible vectors $v$ and in Section
\ref{sec:dim5} we show that the set of such vectors can be infinite.

\subsection{Testing regularity}\label{Sec_Comput_Tools_-_Test_Regularity}
Given a periodic triangulation ${\mathcal T}$ we want to check if it is regular.
According to definition \ref{DEF_regularity} the condition that the simplices
correspond to facets of the convex body $H(f)$ translate into linear inequalities
on the values of $f(v)$ for $v\in \ZZ^n$.
Thus testing regularity is equivalent to checking if an infinite dimensional linear
program has a feasible solution.

We do not have a general method for working with infinite dimensional linear programs
and thus we cannot check regularity of triangulations easily. 
What we can do instead is prove in some cases that a periodic triangulation is not regular.
Let us take a triangulation ${\mathcal T}$ of $\ZZ^n$ and select a finite set ${\mathcal S}$ of simplices.
We can consider the function $f$ on the set of vertices ${\mathcal V}$ corresponding to
the simplices of ${\mathcal S}$.
If we have two adjacent simplices $S_1$ and $S_2$ then denote by $\phi_{S_1}$ the affine linear function
coinciding with $f$ on $S_1$. For a vertex $v$ of $S_2$ which is not in $S_1$ we must have the inequality
\begin{equation*}
  f(v) > \phi_{S_1}(v).
\end{equation*}
Now if the function $f$ does exist, and therefore $\mathcal T$ is regular, then by rescaling there exists a function $f$ on ${\mathcal V}$
satisfying
\begin{equation*}
f(v) \geq \phi_{S_1}(v) + 1.
\end{equation*}
This strengthened inequalities define a polyhedral convex body ${\mathcal Q}$.
If ${\mathcal Q}$ is proven to be empty by linear programming, then we have proved
that ${\mathcal T}$ is not regular.

\subsection{Equivalence and stabilizer}\label{Sec_Comput_Tools_-_Test_Equivalence}
For enumeration purposes, we need to be able to check that two triangulations are equivalent
and to compute the stabilizer of a triangulation. The method is simply to take one simplex in a
triangulation and to consider all ways in which it may be mapped in a simplex of another or the
same triangulation. While computationally expensive, this method is adequate for the cases that
we consider which are low-dimensional.

\section{Enumeration of periodic triangulations in dimension 3}\label{sec:3dim}

The main goal of this and the next section is to give complete enumeration of
periodic triangulations in dimensions $3$ and $4$.

Let $S$ be a simplex in a triangulation (not necessary full-dimensional),
denote by $tr(S)$ the translation class of $S$, i.e. the set of all translations of $S$ by integer vector.

The following lemma is true for arbitrary dimension.

\begin{lemma}
If a facet $F$ is common for full-dimensional simplices $S_1$ and $S_2$,
then facets in the intersection of $tr(S_1)$ and $tr(S_2)$ are only
facets in $tr(F)$.
\end{lemma}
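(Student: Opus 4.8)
The plan is to exploit the $\ZZ^n$-periodicity of ${\mathcal T}$ to move one of the two simplices into a fixed position, and then to use the face-to-face property to pin down the tile on the far side of a facet. Concretely, suppose a facet $G$ occurs in the intersection of $tr(S_1)$ and $tr(S_2)$, i.e.\ $G$ is a facet of a tile $S_1+u\in tr(S_1)$ and of a tile $S_2+w\in tr(S_2)$ with $u,w\in\ZZ^n$. Since ${\mathcal T}$ tiles $\RR^n$ face-to-face, every facet is contained in exactly two full-dimensional tiles, so $S_1+u$ and $S_2+w$ are precisely the two tiles flanking $G$. Translating by $-u$ (allowed since ${\mathcal T}={\mathcal T}+(-u)$), we may assume $u=0$; then $G$ is a facet of $S_1$, its opposite tile is $S_2+v\in tr(S_2)$ for $v=w-u$, and $G\in tr(F)$ exactly when this translated $G$ is. Hence it suffices to prove that the only facet of $S_1$ whose opposite tile lies in $tr(S_2)$ is $F$ itself.

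For this, note first that $S_2$ is the opposite tile of $S_1$ across $F$. If some other facet $G\ne F$ of $S_1$ also had its opposite tile equal to a translate $S_2+v$, then $S_2$ and $S_2+v$ would be two translates of the same simplex, both adjacent to $S_1$, hence both containing the ridge $F\cap G$ of $S_1$. The plan is to rule this out by a rigidity argument: a translate of $S_2$ is determined by its vertex not lying on the relevant facet; the tiles surrounding the ridge $F\cap G$ must fit together without overlap (face-to-face); and the volume bound $\vol(S_2)\le n!$ from Proposition~\ref{prop:volume} confines the relevant vertices to a bounded region, so only finitely many placements occur and the local picture can be analysed directly. A basic ingredient here is the elementary fact that a simplex is never face-adjacent to a nontrivial integer translate of itself: if $G$ and $G-v$ were two facets of $\tconv\{v_0,\dots,v_n\}$ differing by $v\ne 0$, these would be distinct facets that are translates of each other, which forces some vertex $v_m$ to lie on the line through two of the others and contradicts affine independence (one computes $v_k-v_i=nv$ from the vertex sets, then finds $v_m$ strictly between $v_i$ and $v_k$).

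The periodicity-plus-face-to-face reduction in the first paragraph is routine bookkeeping with the direction of the translations. The main obstacle is the rigidity step: showing that the finitely many tiles around the ridge $F\cap G$ leave no room for a second translate of $S_2$ to be glued to $S_1$ along a facet other than $F$. Once that is established we get $G=F$ after the normalisation, hence the original facet equals $F+u\in tr(F)$, which is the assertion of the lemma.
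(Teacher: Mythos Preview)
Your proposal has an explicit gap: you yourself call the ``rigidity step'' the \emph{main obstacle} and then write ``once that is established'' without establishing it. The volume bound from Proposition~\ref{prop:volume} and the vague plan to ``analyse the local picture'' around the ridge $F\cap G$ do not constitute a proof; you never actually derive a contradiction from the hypothesis that $S_2$ and $S_2+v$ with $v\ne 0$ are both glued to $S_1$ along facets.

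What is frustrating is that you stop one line short of the paper's argument. You correctly observe that both $S_2$ and $S_2+v$ contain the ridge $R=F\cap G$. The paper simply notes that this makes both $R$ and $R-v$ ridges of $S_2$: indeed $R\subset F\subset S_2$, and $R\subset G\subset S_2+v$ gives $R-v\subset S_2$. For $v\ne 0$ this means $S_2$ has two ridges that are translates of each other, which is impossible for a simplex (two such ridges share at least two vertices $u,w$, and then $u,u+v,w,w+v$ are four affinely dependent vertices of $S_2$). The case $v=0$ means $S_1$ and $S_2$ share the two facets $F$ and $G$, whose combined vertex set is all of $S_1$, forcing $S_1=S_2$. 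No tiling structure, volume bounds, or local finiteness are needed.

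Your ``elementary fact'' that a simplex cannot be facet-adjacent to a nontrivial translate of itself is correct, and your argument for it (parallel \emph{facets} force affine dependence) is essentially the same idea the paper uses --- but applied at the wrong level. The relevant impossibility is two parallel \emph{ridges} of $S_2$, not two parallel facets, and it follows by the same affine-dependence trick. Replace your unfinished rigidity plan with this one-line observation and the proof is complete.
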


\begin{proof}
Assume contrary, then there is one more facet $F'$ of $S_1$ which
is parallel to some facet of $S_2$. If $F$ and $F'$ are in the same copy of
$S_2$ in $tr(S_2)$, then $S_1$=$S_2$. Otherwise, facets $F$ and $F'$ have a common
ridge, so $S_2$ should have two parallel ridges which is impossible.
\end{proof}



In dimension 3, it appears to be known that there is only one periodic triangulation. See work of Alexeev, \cite[Sect. 5.13]{Alexeev_semiabelian}. Here we give another straightforward approach.

In order to obtain the full classification, we show how one can find the upper bound on the relative volume of a simplex in a periodic triangulation. We apply it for dimensions 3 here and for dimension 4 in the following section. For dimension 2 it is clear that each simplex should have volume 1 (for example, from Pick's formula). 

Let $\mathcal{T}$ denote an arbitrary periodic triangulation of $\ZZ^n$. We will use a more careful approach compared to Proposition \ref{prop:volume} in order to show, that if the relative volume of a simplex exceeds a certain number (actually it is 1 for dimensions 3 and 4), then this simplex can not be included in a periodic triangulation, and in $\mathcal{T}$ particularly.

\begin{proposition}\label{prop:3dim=1}
If $\mathcal{T}$ is three-dimensional, then the relative volume of each three-dimensional simplex is $1$.
\end{proposition}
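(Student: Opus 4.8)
The plan is to show that a three-dimensional simplex $S$ of relative volume $k \geq 2$ cannot appear in a periodic triangulation of $\ZZ^3$, which combined with Proposition~\ref{prop:volume}(i) (giving $\vol(S) \le 3! = 6$) and a case analysis finishes the claim. The starting point is that $S$ has vertices in $\ZZ^3$ but is not unimodular, so by Pick-type reasoning (or the classification of empty lattice simplices) $S$ contains lattice points other than its vertices, or, more usefully, the lattice $\ZZ^3$ is strictly finer than the lattice $\Lambda_S$ generated by the edge vectors of $S$ at one vertex. First I would fix a vertex $v_0$ of $S$ and the three edge vectors $a_1,a_2,a_3$ emanating from it; then $[\ZZ^3 : \Lambda_S] = k \ge 2$, so there is a vector $w \in \ZZ^3 \setminus \Lambda_S$ whose coordinates in the basis $(a_1,a_2,a_3)$ lie in $[0,1)^3$ and are not all zero.

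The key geometric step is to locate $w$ (translated appropriately by $\ZZ^3$) relative to $S$ and its neighbours, and derive a contradiction with the face-to-face condition. Because the triangulation tiles $\RR^3$ face-to-face with vertex set exactly $\ZZ^3$, every lattice point, in particular any lattice point lying in the interior of a face of $S$ or "just outside" a facet of $S$, must be a vertex of the simplices incident to that region; I would use the explicit small-denominator structure available in dimension $3$. Concretely: a non-unimodular empty lattice $3$-simplex does not exist only for $k=1$; for $k \ge 2$ one shows either $S$ is not empty — immediately contradicting that $\ZZ^3$ is the vertex set and the simplices have no interior lattice points — or $S$ is empty but then $k$ is forced (by White's theorem / the classification of empty lattice $3$-simplices, every empty lattice tetrahedron is unimodular, so $k=1$). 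This last fact is the real engine: in dimension $3$ there simply are no empty lattice tetrahedra of volume $\ge 2$, so any simplex of a triangulation with vertex set $\ZZ^3$ automatically has volume $1$.

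Thus the cleanest route is: (1) a simplex $S$ of a triangulation of $\ZZ^3$ has no lattice points other than its four vertices, since any such point would have to be a vertex of the triangulation but cannot lie in the interior of $S$ (it would be in the interior of some simplex or properly inside a face, violating that $S$ itself is a simplex of the face-to-face complex); (2) invoke the classical result that every empty lattice tetrahedron in $\ZZ^3$ is unimodular; (3) conclude $\vol(S)=1$. The main obstacle is step (1)/(2): one must argue carefully that "empty" in the sense of no interior or boundary lattice points beyond the vertices is exactly the hypothesis needed, and either cite the empty-tetrahedron classification or, if the paper prefers a self-contained argument, reprove it by the standard projection argument (project along one edge onto a $2$-plane, reduce to counting lattice points in a lattice triangle, and use Pick's formula to pin down the possible cross-sections). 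I would present the self-contained version since the paper explicitly says it wants a "straightforward approach," reducing everything to the two-dimensional Pick computation already invoked for $n=2$.
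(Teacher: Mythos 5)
Your proposal rests on the claim in step (2) that ``every empty lattice tetrahedron in $\ZZ^3$ is unimodular,'' and this claim is false. The Reeve tetrahedra $\tconv\{(0,0,0),(1,0,0),(0,1,0),(1,1,r)\}$ are empty lattice tetrahedra (no lattice points other than the four vertices, on the boundary or in the interior) of relative volume $r$ for every $r\geq 1$; they are the standard reason Pick's formula has no naive analogue in dimension $3$. White's theorem does classify empty lattice $3$-simplices, but the classification is exactly that they are the simplices $\tconv\{0,e_1,e_2,(p,q,1)\}$ up to $\GL_3(\ZZ)$ (equivalently, they have lattice width $1$), and these have arbitrary volume $q$. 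So emptiness, which is all that your step (1) extracts from the triangulation, cannot force volume $1$, and the proposed argument collapses at its central step. Your step (1) itself is fine, but it is not the hypothesis that does the work.

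The paper's proof has to use the periodicity and the face-to-face condition in an essentially stronger way. After normalizing a volume-$\geq 2$ simplex to have vertices $(0,0,0)$, $(1,0,0)$, $(0,1,0)$, $(a,b,c)$ with $c\geq 2$, $0\leq a,b<c$ (using that each facet is unimodular in its own $2$-plane, which is the legitimate Pick input), it exhibits an explicit rational point that is congruent modulo $\ZZ^3$ both to a point in the relative interior of a $2$-face and to a point in the relative interior of an edge of the simplex; since all integer translates of the simplex belong to the triangulation, this violates the face-to-face property. For the Reeve tetrahedron ($a=b=1$, $c=r$) this produces the point $(0,1/r,0)$, which lies in the interior of the edge from $(0,0,0)$ to $(0,1,0)$ and is simultaneously a translate of an interior point of the face spanned by $(0,0,0)$, $(1,0,0)$, $(1,1,r)$. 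To repair your proof you would need to replace the false unimodularity claim by an argument of this kind that genuinely exploits the $\ZZ^3$-translates of $S$, not just the emptiness of $S$.
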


\begin{proof}
Let $ABCD$ be an arbitrary simplex of $\mathcal{T}$ with volume at least $2$. It is clear, that relative volume of every facet of $ABCD$ in corresponding sublattice is 1. So, we can choose a coordinate system (with matrix transformation from $\GL_3(\mathbb{Z})$) such that vertices of $ABCD$ will have coordinates represented by columns of the following matrix 
\begin{equation*}
\left(\begin{array}{rrrr}
0&1&0&a\\
0&0&1&b\\
0&0&0&c\\
\end{array}\right),
\end{equation*}
where $a,b,c$ are non-negative, $c\geq 2$, $a<c$, and $b<c$.

If $a$ is non-zero (similarly for non-zero $b$), then the point 
\begin{equation*}
\frac{(a-1)A+(c-a)B+D}{c}=\frac{(c-b)A + b C}{c} \pmod 1
\end{equation*}
belongs to translations of two faces of $ABCD$, namely to the two-dimensional face $ABD$ (the left-hand side of the formula) and to the edge $AC$ (the right-hand side of the formula), and is not an integer point. Thus this is a contradiction with the face-to-face property of the tiling $\mathcal{T}$.

If $a=b=0$ then we have an integer point $(0,0,1)$
in the interior of the edge $AD$ which is impossible.
\end{proof}

\begin{remark}
From this proof we can see, that each $3$-dimensional face of $\mathcal{T}$ should have the relative volume $1$, otherwise we will find a contradiction in $3$-dimensional affine space spanned by this face. Indeed, if a relative volume of a $3$-dimensional simplex is more than $1$, then according to the proof of the previous proposition, its lattice translates will intersect in a non-face-to-face manner.
\end{remark}

%
%
%

Next we establish all possible neighbors of a given simplex in a periodic triangulation $\mathcal{T}$ of $\ZZ^3$.

\begin{lemma}\label{lem:3dim-neigh}
If $n=3$, then given a simplex $S_1$ of $\mathcal{T}$ and its facet $F$, we have $3$ options for a simplex $S_2$ of $\mathcal T$ adjacent to $S_1$ by $F$. More precisely, two other facets of $S_1$ and $S_2$ must form a parallelogram.
\end{lemma}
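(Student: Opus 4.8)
The plan is to put $S_1$ in a normal form, pin down the a priori position of the apex of $S_2$, and then exclude every ``unexpected'' apex by confronting $S_2$ with the lattice translates of $S_1$, all of which lie in $\mathcal T$. First I would invoke Proposition~\ref{prop:3dim=1} (and the remark following it) to get $\vol(S_1)=\vol(S_2)=1$, so that both simplices are unimodular. Applying an element of $\AGL_3(\ZZ)$ I may assume $S_1=\tconv\{0,e_1,e_2,e_3\}$ and that $F$ is the facet opposite the origin, so $F=\tconv\{e_1,e_2,e_3\}$ lies in the lattice hyperplane $x_1+x_2+x_3=1$. Since $S_1$ is contained in $\{x_1+x_2+x_3\le1\}$, the adjacent simplex is $S_2=\tconv(F\cup\{v\})$ with $v$ in $\{x_1+x_2+x_3\ge1\}$, and $\vol(S_2)=1$ forces $v$ to lie on the next lattice hyperplane, i.e. $v_1+v_2+v_3=2$.

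The heart of the argument is the claim that in fact $0\le v_i\le1$ for every $i$, which together with $v_1+v_2+v_3=2$ and integrality leaves exactly the three candidates $v\in\{e_1+e_2,\ e_1+e_3,\ e_2+e_3\}$. Suppose not. If at least two coordinates of $v$ are $\le0$ (hence the third is $\ge2$), say $v_2,v_3\le0$ and $v_1\ge2$, then $S_2$ and the translate $S_1+(v-e_1)\in\mathcal T$ share the vertex $v$, and a short computation of the tangent cones at $v$ — that of $S_1+(v-e_1)$ being $\{x_2\ge0,\ x_3\ge0,\ x_1+x_2+x_3\le0\}$, that of $S_2$ the cone on $e_1-v,e_2-v,e_3-v$ — shows that the vector $(e_1-v)+\varepsilon\big((e_2-v)+(e_3-v)\big)$ lies in the interior of both cones for small $\varepsilon>0$; hence $\mathrm{int}(S_2)\cap\mathrm{int}(S_1+(v-e_1))\ne\emptyset$, contradicting that $\mathcal T$ is a packing. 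If instead exactly one coordinate of $v$ is negative while the other two are $\ge1$ (so one of them is automatically $\ge2$), I would look at an apex edge $[v,e_i]$ of $S_2$ and show that, followed from $e_i$ towards $v$, it immediately leaves a suitable translate $S_1+w$ (with small coordinates) through the relative interior of a proper face of $S_1+w$ of positive dimension; since $S_1+w$ is unimodular this crossing point is not a lattice point, so it is an interior point of the edge $[v,e_i]$ lying on no face of $S_1+w$ that contains $v$, whence $S_2\cap(S_1+w)$ is a proper subsegment of an edge of $S_2$ — not a face of $S_2$, contradicting the face-to-face property. Making this second case uniform over all remaining $v$ (in particular when no coordinate of $v$ equals $0$ or $1$, where the crossing point must be caught in the relative interior of an \emph{edge} rather than a facet of a translate of $S_1$) is where the proof requires genuine bookkeeping, and is the main obstacle; conceptually it amounts to proving that $S_2$ is trapped inside the single octahedral cell $\{0\le x_i\le1\}\cap\{1\le x_1+x_2+x_3\le2\}$ carved out by the facet hyperplanes of the translates of $S_1$.

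Finally, once $v=e_i+e_j$ with $k$ the remaining index, the facet of $S_1$ opposite $e_k$ is $\tconv\{0,e_i,e_j\}$ and the facet of $S_2$ opposite $e_k$ is $\tconv\{e_i,e_j,e_i+e_j\}$; both lie in the hyperplane $x_k=0$, they are glued along the common edge $\tconv\{e_i,e_j\}$, and since $0+v=e_i+e_j$ the quadrilateral with vertices $0,e_i,v,e_j$ has its diagonals bisecting each other, i.e. is a parallelogram. This produces the three options, the claimed pair of facets forming a parallelogram, and the bound of three (every admissible $S_2$ being one of the three).
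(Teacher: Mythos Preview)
Your setup and reduction to $v_1+v_2+v_3=2$ match the paper, but from there the two arguments diverge sharply. You try to exclude bad apices $v$ by producing, for each $v\notin\{0,1\}^3$, a specific translate $S_1+w$ that collides with $S_2$ either in interior points (your Case~1) or in a non-face subsegment of an apex edge (your Case~2). Case~1 is fine as written; Case~2 is, as you yourself flag, only a sketch --- you have not exhibited the translate $w$, nor verified that the edge $[v,e_i]$ really exits $S_1+w$ through the \emph{relative interior} of a positive-dimensional face (and not through a vertex), uniformly over the infinitely many $v$ with one negative and two positive coordinates. That is a genuine gap, not just bookkeeping: for instance when $v=(-1,1,2)$ versus $v=(-5,3,4)$ the relevant translate and the face being crossed are different, and you would need an argument that scales with $|v|$.

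The paper avoids all of this with a single parity observation. Since $v_1+v_2+v_3=2$, some coordinate (say $v_3$) is even; then the midpoint of the edge $[e_1,v]$ has two integer coordinates and one half-integer coordinate, hence is a lattice translate of the midpoint of $[0,e_1]$ or of $[0,e_2]$. But a point in the relative interior of two edges of $\mathcal T$ forces those edges to coincide by the face-to-face property, so $[e_1,v]$ is a lattice translate of $[0,e_1]$ or $[0,e_2]$; the same for $[e_2,v]$. This pins $v$ down to $(1,1,0)$ (or $A$, which is excluded) in two lines, with no case analysis on the size of the coordinates. The moral: rather than chasing intersections with translates of the full $3$-simplex, compare \emph{edges} via their midpoints --- the face-to-face condition is already strong enough at the $1$-skeleton to do the job.
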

\begin{proof}
Without loss of generality we assume, that vertices of $S_1$ are:
$A=(0,0,0)$,
$B=(1,0,0)$,
$C=(0,1,0)$,
$D=(0,0,1)$,
and $BCD$ is the common facet. The fourth vertex $E$ of $S_2$ has
coordinates $(x,y,z)$ with $x+y+z=2$. 

At least one of numbers $x,y,z$ is even, assume $z$. Then the midpoint of $EB$ has coordinates $\left(\frac{x+1}{2},\frac{y}{2},\frac{z}{2}\right)$. Among numbers $x+1$ and $y$ one is even, so this midpoint has two integer coordinates, and one half-integer. Therefore, this midpoint is a translation of one of midpoints: $AB$ or $AC$. Therefore the edge $EB$ is a translation of $AB$ or $AC$. Similarly, the edge $EC$ is a translation of $AB$ or $AC$.

There are two options remaining for point $E$: $E=(0,0,0)=A$, or $E=(1,1,0)$. In the first case simplices $S_1$ and $S_2$ coincide, which is impossible. In the second case faces $ABC$ and $EBC$ form a parallelogram.
\end{proof}

We proceed with the classification of periodic triangulations of $\ZZ^3$. We continue to use all notations of Lemma \ref{lem:3dim-neigh} and its proof.

\begin{theorem}
There is unique periodic triangulation of $\ZZ^3$ up to $\GL_3(\ZZ)$ equivalence.
\end{theorem}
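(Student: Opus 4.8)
The plan is to bootstrap from Lemma~\ref{lem:3dim-neigh}: every simplex of $\mathcal{T}$ has relative volume $1$ (Proposition~\ref{prop:3dim=1}), so by an element of $\GL_3(\ZZ)$ we may fix one simplex $S_1$ to be the standard corner simplex with vertices $A=(0,0,0)$, $B=(1,0,0)$, $C=(0,1,0)$, $D=(0,0,1)$. We then walk across each of the four facets of $S_1$ using Lemma~\ref{lem:3dim-neigh}. Across the facet $BCD$, the unique non-degenerate option is $E=(1,1,0)$, so $ABCE$ is a parallelepiped slab and the adjacent simplex is $S_2 = \tconv\{B,C,D,E\}$ with $E=(1,1,0)$. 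The key observation is that, up to translation, $S_2$ is $\GL_3(\ZZ)$-equivalent to $S_1$ itself (both are unimodular simplices sharing the parallelogram $\{A,B,C,E\}$ structure), so the local picture around any facet is forced: each simplex meets each neighbor so that the two ``opposite'' triangular facets form a parallelogram. First I would record this as the statement that $\mathcal{T}$, being a face-to-face tiling by unimodular simplices in which adjacency is rigidly prescribed, must be (a lattice translate and $\GL_3(\ZZ)$-image of) the standard triangulation of $\RR^3$ obtained by slicing each unit cube $[0,1]^3$ into $6$ simplices along the main diagonal.

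Concretely, I would argue as follows. Starting from $S_1$ as above, apply Lemma~\ref{lem:3dim-neigh} across the facet $BCD$ to get $E=(1,1,0)$; then apply it across the facet $ABD$ of $S_1$ to get the reflected neighbor, and so on around the vertex $A$. After finitely many such steps one fills the closed star of the segment from $(0,0,0)$ to $(1,1,1)$ and sees that all six simplices incident to the edge $\tconv\{(0,0,0),(1,1,1)\}$ appear, namely the simplices $\tconv\{0,\,e_{\sigma(1)},\,e_{\sigma(1)}+e_{\sigma(2)},\,e_{\sigma(1)}+e_{\sigma(2)}+e_{\sigma(3)}\}$ for the six permutations $\sigma$ of $\{1,2,3\}$. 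By $\ZZ^3$-periodicity, translating this cube-decomposition by all of $\ZZ^3$ produces a periodic triangulation $\mathcal{T}_0$; and since at every facet the neighbor was forced to be the unique parallelogram-completing simplex, $\mathcal{T}$ must coincide with $\mathcal{T}_0$ on an ever-growing connected region, hence everywhere. Finally, $\mathcal{T}_0$ is exactly the Delaunay triangulation of $\ZZ^3$ for the form $A[x]=x_1^2+x_2^2+x_3^2+x_1x_2+x_1x_3+x_2x_3$ (equivalently of the lattice $A_3^\ast$ up to equivalence), confirming consistency with Proposition~\ref{prop:3dim=1} and the remark of Alexeev, though for the present statement only uniqueness is needed.

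The one point requiring care — and the main obstacle — is the ``propagation'' argument: Lemma~\ref{lem:3dim-neigh} fixes the neighbor across a \emph{single} facet up to the ambiguity of \emph{which} pair of facets forms the parallelogram, and a priori different choices at different facets might not be globally compatible or might be related by a symmetry rather than being literally equal. I would resolve this by checking compatibility around a single edge (a codimension-$2$ face): the link of an interior edge in a $3$-dimensional triangulation is a cycle, the simplices around it are totally ordered by the rotation, and Lemma~\ref{lem:3dim-neigh} forces each consecutive pair to be parallelogram-adjacent, which pins down the cyclic sequence to have length exactly $6$ and to be the standard one. Once every edge-link is the standard hexagon, the global triangulation is rigid and equals $\mathcal{T}_0$. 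The verification that no other length of edge-cycle can occur — i.e. that six is forced — is the small combinatorial computation that carries the proof, and it follows directly from the three explicit options for $E$ in Lemma~\ref{lem:3dim-neigh} together with the volume-$1$ constraint of Proposition~\ref{prop:3dim=1}.
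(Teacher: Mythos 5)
There is a genuine gap. Your argument hinges on the claim that the neighbor of $S_1$ across a facet is \emph{forced}, so that the triangulation propagates rigidly outward from one simplex. But Lemma~\ref{lem:3dim-neigh} gives \emph{three} candidates for the fourth vertex across each facet (the three coordinate permutations of the parallelogram-completing point), not one; your statement that ``the unique non-degenerate option is $E=(1,1,0)$'' contradicts the lemma you are invoking, and you acknowledge the three options only at the very end. The whole difficulty of the theorem is precisely to show that, among the $3^4$ a priori combinations of local choices, only one global configuration (up to $\GL_3(\ZZ)$) is consistent. Your proposed fix --- analyze the link of each edge and show it is ``the standard hexagon of length exactly $6$'' --- is not carried out, and as stated it is false: in the triangulation you are trying to characterize, the face-diagonal edges such as $\tconv\{0,e_1+e_2\}$ have only $4$ incident tetrahedra, so not every edge link is a hexagon. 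The sentence ``the verification that no other length of edge-cycle can occur \dots is the small combinatorial computation that carries the proof'' concedes that the actual content of the proof has been deferred rather than supplied.

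For comparison, the paper closes this gap with a global counting ingredient that your outline never uses: since every simplex has relative volume $1$ (Proposition~\ref{prop:3dim=1}) and the volumes of the translation classes sum to $3!=6$, there are exactly six translation classes of simplices. The paper then lists the ``unpaired'' facets of the partially built tiling, argues that no translation class can absorb more than a certain number of them, and uses this bookkeeping to force the pairing of facets into classes $tr(S_3),\dots,tr(S_6)$, ruling out the alternative choices $H_2,H_3$ from Lemma~\ref{lem:3dim-neigh} explicitly. If you want to salvage your edge-link strategy, you would need to (a) restrict it to the edges whose links actually are forced to be hexagons (e.g.\ the long diagonal), (b) prove, not assert, that the three-way ambiguity at each facet collapses when one goes around such an edge, and (c) still invoke the six-class volume count or an equivalent global argument to conclude. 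As a side remark, the quadratic form $x_1^2+x_2^2+x_3^2+x_1x_2+x_1x_3+x_2x_3$ you name is (a multiple of) the Gram matrix of $A_3$, whose Delaunay tessellation contains octahedra and is therefore not a triangulation; the triangulation in question is Delaunay for a different (e.g.\ $A_3^*$-type) form.
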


\begin{proof}
From the proof of Lemma \ref{lem:3dim-neigh} we have a pair of simplices $S_1$ and $S_2$, and a parallelogram $ABEC$ (actually a unit square). With translations of this parallelogram we can tile an arbitrary plane $z=k$ for integer $k$, so any simplex of the
triangulation should be between a pair of consecutive planes parallel to $z=0$.

Currently we have six ``unpaired'' facets of tiling (i.e. facets that belong to only one simplex currently determined): $ABC$, $ABD$, $ACD$, $EBC$, $EBD$, $ECD$. No translational class of full-dimensional simplex can contain more than two of these facets, because otherwise it will have two common facets with one of simplices $S_1$ or $S_2$. The second simplex $S_3$ incident to the facet $ABC$ has the fourth vertex on the plane $z=-1$, because $S_3$ has three vertices on $z=0$, and $ABCD$ has fourth vertex on $z=1$). Similarly, the simplex $S_4$ incident to $EBC$ has the fourth vertex on the plane $z=-1$. So, $S_3$ can not have a facet which is a translation of any of five remaining ``unpaired'' facets, since four of these facets (except $EBC$) have two vertices on lower plane and one on the upper, and the facets $EBC$ is parallel to facet $ABC$. Therefore, classes $tr(S_3)$ and $tr(S_4)$ contain only facets $ABC$ and $EBC$ from these six classes.

The remaining unpaired facets are: $ABD$, $ACD$, $EBD$, $ECD$. These classes should be contained in two translational classes of simplices (we already found four classes generated by $S_1$, $S_2$, $S_3$, and $S_4$, and we must have six translational classes in total due to volume argument). No class can cover more than two, so these four facets should be divided in pairs, and each pair should belong to one translational class. Pairs can not be from one simplex $S_1$ or $S_2$. So $ABD$ should be paired with $EBD$ or $ECD$. The first case is impossible,
because the edge $BD$ can belong only to one simplex from this class, so
this class should be $tr(ABDE)$, but $ABDE$ intersects with interior of
$ABCD$.

Therefore, the class $tr(S_5)$ contains facets $ABD$ and $ECD$, and the class $tr(S_6)$ contains remaining facets $ACD$ and $EBD$.

The edges $AB$ and $EC$ are equal and parallel, so if we translate $ECD$ so that $EC$ will coincide with $AB$ (by the vector $(0,-1,0)$), we will get the second facet of the same simplex from this class. Thus, we can
reconstruct a representative $S_5$ of this class with vertices: 
$A=(0,0,0)$ (translation of $C$ by $(0,-1,0)$),
$B=(1,0,0)$ (translation of $E$ by $(0,-1,0)$),
$D=(0,0,1)$,
$F=(0,-1,1)$ (translation of $D$ by $(0,-1,0)$).

Similarly as $S_6$ we can take the simplex with vertices:
$A=(0,0,0)$ (translation of $B$ by $(-1,0,0)$),
$C=(0,1,0)$ (translation of $E$ by $(-1,0,0)$),
$D=(0,0,1)$,
$G=(-1,0,1)$, (translation of $D$ by $(-1,0,0)$).

From four completely defined classes $tr(S_1)$, $tr(S_2)$, $tr(S_5)$, and $tr(S_6)$ we have the following facets that do not belong to the second full-dimensional simplex so far: $ABC$, $EBC$, $ADF$, $BDF$, $ADG$, $CDG$. No class $tr(S_3)$ or $tr(S_4)$ can cover more than three of these facets, otherwise it will cover two facets from one simplex (facets $ABC$ and $EBC$ cannot be covered simultaneously because they are parallel). So each class covers exactly three.

We apply Lemma \ref{lem:3dim-neigh} for simplices $ABCD$ and $S_3$ with common facet $ABC$. We have three options for the fourth vertex of $S_3$ (this vertex forms a
parallelogram with three vertices of $ABCD$):
$H_1=(1,1,-1)$ (parallelogram with $BCD$),
$H_2=(1,0,-1)$ (parallelogram with $ABD$),
$H_3=(0,1,-1)$ (parallelogram with $ACD$).

Assume that $S_3=ABCH_2$ ($S_3=ABCH_3$ is similar). It contains facets $ABC$
and $ADG$ ($ADG$ translated by $(1,0,-1)$ is $H_2BA$), but does not contain
other facets. For example, if it contains $ADF$, then lower point of $S_3$
(vertex with smallest $z$-coordinate, i.e. $H_2$) should be translated into lower
point of this facet, i.e. $A$. But this translation does not match any
facet of $S_3$ with $ADF$. Similarly with other ``unpaired'' facets, except
$EBC$, but $S_3$ already has a facet parallel to $EBC$, which is $ABC$.

So, there is only one possible case for $S_3$ which is $ABCH_1$ (the translation class contains $ABC$, $BDF$,
$CDG$). Similarly, there is only one case for $S_4=EBCH_1$ (the translation class contains $EBC$,
$ADF$, $ADG$).

We reconstructed the whole triangulation which is unique up to $\GL_3(\ZZ)$-transformation.
\end{proof}

\section{Enumeration of periodic triangulations in dimension 4}\label{sec:4dim}

As with dimension 3, we first bound the relative volume of a four-dimensional simplices.

\begin{proposition}\label{prop:4dim=1}
If $\mathcal{T}$ is a periodic triangulation of $\ZZ^4$, then volume of each four-dimensional simplex is $1$.
\end{proposition}


\begin{proof}
Let $ABCDE$ be an arbitrary simplex of $\mathcal{T}$ with volume at least $2$. We can choose a coordinate system (with matrix transformation from $\GL_4(\mathbb{Z})$) such that vertices of $ABCDE$ will have coordinates represented by columns of the following matrix 
\begin{equation*}
\left(
\begin{array}{rrrrr}
0&1&0&0&a\\
0&0&1&0&b\\
0&0&0&1&c\\
0&0&0&0&d
\end{array}
\right),
\end{equation*}
where $a,b,c,d$ are non-negative, $d\geq 2$, $a\leq b\leq c<d$.

If $c=0$, then the point $(0,0,0,1)$ lies in the interior of $AE$ which is impossible, so $c\geq 1$.

If $a+b\leq d$, then
\begin{equation*}
\frac{(c-1)A + (d-c)D + E}{d}=\frac{(d-a-b)A + a B + b C}{d} \pmod 1,
\end{equation*}
but these points lie on different faces of $ABCDE$ and the tiling will be non face-to-face.

If $b+c>d$, then
\begin{equation*}
\frac{(b+c-d-1)A + (d-b)C + (d-c)D + E}{d}=\frac{(d-a)A + a B}{d} \pmod 1
\end{equation*}
which is again a contradiction.

So, $a+b>d\geq b+c$, which contradicts with the inequality $a\leq c$.
\end{proof}

Note that the proofs of this proposition and of the similar proposition \ref{prop:3dim=1} for dimension 3 can be combined in the following corollary. 

\begin{corollary}
If $\mathcal{T}$ is an $n$-dimensional periodic triangulation, then all $3$- and $4$-dimensional faces have relative volume $1$.
\end{corollary}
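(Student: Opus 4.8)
The plan is to show that the corollary follows immediately from Propositions \ref{prop:3dim=1} and \ref{prop:4dim=1} together with the remark following Proposition \ref{prop:3dim=1}. The key observation is that any $k$-dimensional face $G$ of an $n$-dimensional periodic triangulation $\mathcal{T}$ (for $k=3$ or $k=4$) lives inside a $k$-dimensional affine subspace, and the restriction of $\mathcal{T}$ to that subspace, intersected with the integer lattice it spans, is itself a $k$-dimensional periodic triangulation. Indeed, the face-to-face and periodicity properties of $\mathcal{T}$ descend to the induced decomposition of the affine hull of $G$; the relevant lattice is the one generated by the vertices of the $k$-simplices lying in that hull, which has full rank $k$.

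Concretely, I would first fix a $k$-dimensional simplex $G$ of $\mathcal{T}$ and consider the affine subspace $\Pi = \operatorname{aff}(G)$. Let $\Lambda$ be the $k$-dimensional lattice generated (as an affine lattice) by the vertices of all simplices of $\mathcal{T}$ contained in $\Pi$; after an $\AGL$-change of coordinates we may assume $\Pi$ is a coordinate subspace and $\Lambda = \ZZ^k$. Next I would argue that the collection of simplices of $\mathcal{T}$ contained in $\Pi$ forms a periodic triangulation of $\Lambda$: it tiles $\Pi$ because $\mathcal{T}$ tiles $\RR^n$ face-to-face, so the trace of the tiling on the subspace $\Pi$ is again a face-to-face tiling, and all cells of this trace are simplices since they are faces of simplices of $\mathcal{T}$; periodicity along $\Lambda$ is inherited from periodicity of $\mathcal{T}$ along $\ZZ^n$. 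The relative volume of $G$ measured inside $\Pi$ with respect to $\Lambda$ coincides with the relative volume of $G$ as a $k$-dimensional simplex in this induced triangulation.

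Finally, I would invoke Proposition \ref{prop:3dim=1} when $k=3$ and Proposition \ref{prop:4dim=1} when $k=4$ to conclude that the relative volume of $G$ in the induced $k$-dimensional periodic triangulation is $1$, hence its relative volume as a face of $\mathcal{T}$ is $1$ as well. This handles all $3$- and $4$-dimensional faces of $\mathcal{T}$ simultaneously.

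The main subtlety I anticipate is the verification that the trace of $\mathcal{T}$ on the affine subspace $\Pi$ really is a \emph{triangulation of $\ZZ^k$} in the precise sense of Definition \ref{DEF_triangulation}, and in particular that the lattice $\Lambda$ generated by the vertices lying in $\Pi$ has full rank $k$ and is exactly the set of integer points of $\Pi$ in the chosen coordinates — equivalently, that $G$ has relative volume $1$ with respect to $\Lambda$ rather than some coarser lattice. This is where one uses that $G$ is a face of a genuine triangulation (so there are enough simplices inside $\Pi$ to generate a full-rank lattice) and that by the remark after Proposition \ref{prop:3dim=1} any violation of volume $1$ already produces a non-face-to-face intersection within $\Pi$ itself. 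Once this bookkeeping about $\Lambda$ is in place, the rest is a direct citation of the two preceding propositions.
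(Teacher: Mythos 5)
Your conclusion is right, but the route you chose has a genuine gap. The reduction rests on the claim that the simplices of $\mathcal{T}$ contained in $\Pi=\operatorname{aff}(G)$ tile $\Pi$, justified by saying that the trace of $\mathcal{T}$ on $\Pi$ is a face-to-face tiling whose cells ``are faces of simplices of $\mathcal{T}$.'' That step fails: the cells of the trace are the intersections $S\cap\Pi$ over the top-dimensional simplices $S$, and $\Pi$ will in general slice through the interior of simplices of $\mathcal{T}$ away from $G$ --- for $\Pi$ to avoid every open $n$-simplex it would have to lie entirely in the $(n-1)$-skeleton of $\mathcal{T}$, a lamination-type condition that a $3$- or $4$-dimensional affine subspace of $\RR^n$ has no reason to satisfy. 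Such a cell $S\cap\Pi$ is neither a face of $S$ nor, in general, a lattice simplex, so the faces of $\mathcal{T}$ lying in $\Pi$ need not cover $\Pi$, and you do not obtain a $k$-dimensional periodic triangulation to which Propositions \ref{prop:3dim=1} and \ref{prop:4dim=1} could be applied. The secondary issue you flag yourself --- that your $\Lambda$, generated by the vertices of simplices inside $\Pi$, might be strictly coarser than $\ZZ^n\cap\operatorname{lin}(\Pi-A)$, so that relative volume $1$ with respect to $\Lambda$ would not yield relative volume $1$ with respect to the correct section lattice --- is also left unresolved.

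The fix, and what the paper actually intends, is to observe that the proofs of Propositions \ref{prop:3dim=1} and \ref{prop:4dim=1} never use any simplex of the ambient triangulation other than the one simplex under consideration: starting from a $k$-simplex $G$ of relative volume at least $2$ in the lattice $\Lambda'=\ZZ^n\cap\operatorname{lin}(\Pi-A)$, they exhibit a non-lattice point lying simultaneously on a face of $G$ and on a $\Lambda'$-translate of another face of $G$. Both of these are faces of $\mathcal{T}$ (integer translates of faces of $\mathcal{T}$ are again faces of $\mathcal{T}$ by periodicity), so one gets a direct contradiction with the face-to-face property of $\mathcal{T}$ itself, with no induced lower-dimensional triangulation needed. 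Recasting your argument in this form removes both difficulties at once.
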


This corollary allows us to formulate a local approach to enumeration of all periodic tilings. We used this approach in dimension $3$ in the previous section and now we are going to use it in dimension $4$. We can analyze local structure of the tiling $\mathcal{T}$ and show that given a simplex $S$ and its facet $F$, there are only finitely many options to attach another simplex $T$ at $F$ without violating the face-to-face property. Unfortunately this method doesn't work if dimension $n\geq 5$ as shown in Section \ref{sec:dim5}.

\begin{theorem}\label{thm:4dim-neigh}
For a fixed four-dimensional simplex $S$ and its facet $F$, there are at most 10 options for another simplex adjacent to $S$ by $F$ in a periodic triangulation of $\ZZ^4$.
\end{theorem}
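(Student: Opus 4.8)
The plan is to follow the exact template used in dimension $3$ (Lemma \ref{lem:3dim-neigh}), pushing the midpoint/barycenter argument into dimension $4$. By Proposition \ref{prop:4dim=1} we already know every $4$-simplex has volume $1$, so we may normalize $S$ to have vertices $A=(0,0,0,0)$, $B=e_1$, $C=e_2$, $D=e_3$, $E=e_4$ and take $BCDE$ to be the common facet. The new simplex $S_2$ shares $BCDE$, so its fifth vertex $P=(x_1,x_2,x_3,x_4)$ lies on the affine hyperplane $x_1+x_2+x_3+x_4=2$ on the opposite side of $BCDE$ from $A$; since $S_2$ also has volume $1$, no further normalization is lost. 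The goal is to show that the face-to-face constraint forces $P$ into a short explicit list, and to count it.

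The key step is a barycentric-coordinate analysis analogous to the ``midpoint of $EB$'' trick in dimension $3$, but now using weighted averages with denominator $2$ or $3$. Concretely, for each vertex $Q\in\{B,C,D,E\}$ consider the segment $PQ$ and its midpoint; exactly as in the $n=3$ proof, an even coordinate of $P\pm$ the corresponding unit vector produces a point with three integer coordinates and one half-integer, which must therefore be a lattice translate of the midpoint of some edge of $S$ through $A$, i.e.\ of $\tfrac12 A Q'$ for some $Q'\in\{B,C,D,E\}$. This forces the edge $PQ$ to be a translate of an edge $AQ'$ of $S$, hence $P-Q\in\{\pm e_1,\pm e_2,\pm e_3,\pm e_4\}$ modulo the constraint $\sum_i x_i=2$. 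One then also has to run the analogous argument on the $2$-faces $PQ_iQ_j$ versus $2$-faces $AQ_kQ_\ell$ (points with denominator $3$), which is what the dimension-$3$ remark about ``two parallel ridges'' replaces; here in dimension $4$ there are genuinely more configurations, and the refined barycentric identities (of the shape displayed in the proof of Proposition \ref{prop:4dim=1}) are used to eliminate the ones that violate face-to-face. Collecting the surviving $P$, one checks each of them actually gives an admissible adjacent simplex, and that after quotienting by the stabilizer of $(S,F)$ (the symmetries of $S$ fixing $F$, a copy of $\Sym_4$ acting on $B,C,D,E$) one is left with at most $10$ cases — realistically, these will be indexed by the multiset of how many coordinates of $P$ agree with which vertices, e.g.\ $P=A$ (excluded, as it returns $S$), $P=Q_i+Q_j$ giving a parallelogram $AQ_iQ_j\,($translate$)$, and a few with a coordinate equal to $-1$.

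I would organize the write-up as: (1) normalize and set up $P$; (2) prove each edge $PQ_i$ is a translate of some $AQ_j$ via the midpoint argument, getting $P\in\{0,-1,1,2\}^4$-type constraints; (3) intersect with $\sum x_i=2$ and discard $P=A$; (4) for the remaining finite set, use the $2$-face barycenter identities to throw out the non-face-to-face ones; (5) enumerate the survivors up to the $\Sym_4$ symmetry of $F$ and verify the count is $\le 10$. The main obstacle I anticipate is step (4): in dimension $4$ the crude ``two parallel ridges'' obstruction from the $n=3$ case is no longer by itself enough, so one must carefully produce, for each bad candidate $P$, an explicit point lying simultaneously in a lattice translate of two different proper faces of $S$ and of $S_2$ — essentially reusing the two displayed identities in the proof of Proposition \ref{prop:4dim=1} with $d$ replaced by the small denominators $2,3$. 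Getting a uniform, case-free version of that step (rather than a tedious enumeration of each $P$) is the part that will take the most care; if a uniform argument is elusive, a finite but bounded case check suffices since step (3) already leaves only boundedly many $P$.
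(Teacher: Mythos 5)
There is a genuine gap, and it sits exactly where the paper's proof does all of its work. Your step (2) claims that the midpoint argument forces every edge $PQ_i$ to be a translate of some edge $AQ_j$, and hence that $P$ lands in a bounded set. That conclusion is only valid when $P$ has an even coordinate: if $t$ is even then $x+y+z$ is even, a second coordinate is even, and the midpoint of $BP$ has three integer and one half-integer coordinate, which is the configuration that forces coincidence with a translate of the midpoint of $AB$ or $AC$. When all four coordinates of $P$ are odd (which is compatible with $\sum x_i=2$), the midpoint of $BP$ has \emph{three} half-integer coordinates and matches no edge-midpoint of $S$, so the argument yields nothing. Worse, your stated conclusion $P-Q_i\in\{\pm e_j\}$ would wrongly exclude $P=(1,1,1,-1)$, which is one of the ten admissible answers. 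And the all-odd case is not a residual finite check: the vertices $P=(1,1,a,-a)$ for odd $a\geq 3$ all give volume-$1$ simplices sharing the facet $BCDE$, an infinite family that your denominator-$2$ and denominator-$3$ identities cannot see. The paper eliminates this family by an induction on the residues of $P$ modulo $2^k$ for \emph{all} $k$ (showing the residues must be of the form $(1,1,a,2^k-a)$, with five subcases per step each killed by an explicit barycentric identity of denominator $2^{k+1}$), followed by a final identity with denominator $a$ itself to rule out $a\geq 3$. No fixed finite set of denominators suffices, so your fallback of a ``finite but bounded case check'' in step (4) is not available — step (3) simply does not leave boundedly many $P$.

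Two smaller points. First, the count of $10$ in the statement is the raw count of admissible vertices ($6$ permutations of $(1,1,0,0)$ plus $4$ permutations of $(1,1,1,-1)$), not a count up to the $\Sym_4$ stabilizer of $F$; up to that symmetry there are only two types. Second, the difficulty you are underestimating here is precisely the phenomenon of Theorem \ref{THM_infinite_sequence_intersection}: in dimension $5$ the analogous infinite family of candidate neighbors genuinely survives all local obstructions, which is why the delicate $2$-adic escalation is unavoidable already in dimension $4$.
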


\begin{proof}
We already know, that all simplices have volume 1. We fix one simplex $S_1$ with vertices
$A=(0,0,0,0)$,
$B=(1,0,0,0)$,
$C=(0,1,0,0)$,
$D=(0,0,1,0)$,
$E=(0,0,0,1)$,
and find all possibilities for the vertex $F$ of the simplex $S_2=BCDEF$ adjacent to $S_1$ by facet $BCDE$. We know that $F$ has coordinates $(x,y,z,t)$ with $x+y+z+t=2$. We will show that there are only 10 options for the vertex $F$.

We will do that by analyzing all possible remainders of coordinates of $F$ modulo powers of $2$. First, assume $t$ is even, then at least one more number among $x,y,z$ is even, say $z$. Then midpoint of $BF$ has coordinates 
\begin{equation*}
\left(\frac{x+1}{2},\frac{y}{2},0,0\right) \pmod 1
\end{equation*}
and it is an integer translation of the midpoint of $AB$ (if $x$ and $y$ are odd) or $AC$ (if $x$ and $y$ are even). The only case that will not contradict that $\mathcal{T}$ is face-to-face is when $BF$ is parallel and equal to $AC$. Similarly we get that $CF$ is parallel and equal to $AB$, so $F=(1,1,0,0)$. Also we can get five more coordinate permutations of this point in the case $F$ has an even coordinate, in all other cases $x,y,z,t$ are odd.

We know that $x,y,z,t$ are odd and their sum is 2, so possible cases for modulo 4 remainders are $(3,3,3,1)$ and $(1,1,1,3)$ ($x,y,z,t$ are equivalent, so we will treat these cases as coordinates for $(x,y,z,t)$ modulo 4).
In the first case
\begin{equation*}
\frac{B + C + D + F}{4}=\frac{3A + E}{4} \pmod 1,
\end{equation*}
and the tiling is non face-to-face, so only the case $(1,1,1,3)$ of remainders modulo 4 is possible.

\begin{lemma}
For any $k\geq 2$ the remainders of the coordinates of $F$ modulo $2^k$ are $(1,1,a,2^k-a)$ for some odd $a\in[0,2^k-1]$, probably permuted.
\end{lemma}
\begin{proof}
We prove the statement by induction on $k$. The basis of induction is true for $k=2$ and $a=1$. Suppose the lemma is true for $k$ and we will prove it for $k+1$. All the coordinates in our proof could be permuted, and when we consider a coordinate modulo $n$ we usually take a representative from the interval $[0,n)$.

The point $F$ has coordinates $(1,1,a,2^k-a)$ modulo $2^k$ with odd $a<2^k$. Taking in account that sum of all coordinates is $2$ there are five options for the remainders of coordinates modulo $2^{k+1}$:

\begin{itemize}
\item $F=(1, 1,     a,     2^{k+1}-a) \pmod {2^{k+1}}$. This case satisfies requirements of the induction step.
\item $F=(1, 1,     2^k+a, 2^{k}-a) \pmod {2^{k+1}}$. This case satisfies requirements of the induction step. 
\item $F=(1, 2^k+1, a,     2^{k}-a) \pmod {2^{k+1}}$. One of numbers $a$ or $2^k-a$ is less than $2^{k-1}$, without loss of generality we can assume that $0<a<2^{k-1}$. Then the interval $(2^k,2^{k+1})$ contains at least two multiples of $a$, so there is a positive odd number $b<2^{k+1}$ such that $2^k<ab<2^{k+1}$. Then $b F=(b,2^k+b, ab, 2^{k+1}+2^k-ab)$ modulo $2^{k+1}$, and 
\begin{multline*}
  \frac{b F + (2^k-b)C+(2^{k+1}-ab)D + (ab-2^k)E}{2^{k+1}}=\\
  =\left(\frac{b}{2^{k+1}},0,0,0\right)=\frac{(2^{k+1}-b)A + b B}{2^{k+1}} \pmod 1
\end{multline*}
and the tiling is not face-to-face.
\item $F=(1,2^{k}+1,2^k+a,2^{k+1}-a) \pmod {2^{k+1}}$. Then $(2^k+1)F=(2^k+1,1,a,2^k-a)$ modulo $2^{k+1}$, and 
\begin{multline*}
  \frac{(2^k+1)F + (2^k-1)B}{2^{k+1}}=\\
  =\left(0,\frac{1}{2^{k+1}},\frac{a}{2^{k+1}},\frac{2^k-a}{2^{k+1}}\right) = \frac{(2^k-1)A + C + a D + (2^k-a)E}{2^{k+1}} \pmod 1
\end{multline*}
and the tiling is not face-to-face.
\item $F=(2^k+1,2^{k}+1,2^k+a,2^{k}-a) \pmod {2^{k+1}}$. Then $(2^k+1)F=(1,1,a,2^{k+1}-a)$ modulo $2^{k+1}$, and
\begin{multline*}
  \frac{(2^k+1)F + (2^k-a-1)D + a E}{2^{k+1}}=\\
  =\left(\frac{1}{2^{k+1}},\frac{1}{2^{k+1}},\frac{2^k-1}{2^{k+1}},0\right)=\frac{(2^k-1)A + B + C + (2^k-1)D}{2^{k+1}} \pmod 1
\end{multline*} 
and the tiling is not face-to-face.
\item $F=(2^k+1,2^{k}+1,a,2^{k+1}-a) \pmod {2^{k+1}}$. Then $(2^k+1)F=(1,1,2^k+a,2^{k}-a)$, and
\begin{multline*}
  \frac{(2^k+1)F + (2^k-a)D + (a-1)E}{2^{k+1}}=\\
  =\left(\frac{1}{2^{k+1}},\frac{1}{2^{k+1}},0,\frac{2^k-1}{2^{k+1}}\right)=\frac{(2^k-1)A+B+C+(2^k-1)E}{2^{k+1}} \pmod 1
\end{multline*}
and the tiling is not face-to-face.
\end{itemize}

Thus, the induction step is proved.
\end{proof}

We proceed with the proof of the theorem.

We can take $k$ such that $2^k>2\max(|x|,|y|,|z|,|t|)$, then the only possibility for coordinates with remainder 1 modulo $2^k$ is 1, so two coordinates of $F$ are 1's and two other add up to 0, so $F=(1,1,a,-a)$ for some positive odd number $a$ (or permutation). If $a\geq 3$, then 
\begin{equation*}
\frac{F + (a-1)B}{a}=\left(0,\frac{1}{a},0,0\right)=\frac{(a-1)A + B}{a} \pmod 1,
\end{equation*}
and if the tiling is face-to-face then the edge $BF$ is a translation of the edge $AB$. In that case $F=(0,0,0,2)$, so it doesn't have all odd coordinates. Therefore $a=1$ and $F=(1,1,1,-1)$.

In total we get 10 options for the point $F$: $(1,1,0,0)$ (all six permutations), and $(1,1,1,-1)$ (all four permutations).
\end{proof}

\begin{theorem}\label{SEC_Classification_dimension4}
(i) There are exactly four periodic triangulations of $\ZZ^4$ up to $\GL_4(\ZZ)$ equivalence.

(ii) Any partial triangulation of $\ZZ^4$ is extensible to a full triangulation of $\ZZ^4$.
  
\end{theorem}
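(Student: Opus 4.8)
The plan is to turn the local structure result, Theorem~\ref{thm:4dim-neigh}, into a finite enumeration, and then to use the absence of dead ends in that enumeration to deduce the extensibility statement.

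\emph{Reduction of (i) to a finite search.} By Proposition~\ref{prop:4dim=1} every simplex of a periodic triangulation $\mathcal T$ of $\ZZ^4$ has volume $1$, so $\mathcal T$ is made of exactly $4!=24$ translation classes of simplices; moreover, by Theorem~\ref{thm:4dim-neigh}, once a simplex is put in the standard position (vertices $0,e_1,\dots,e_4$) each of its five facets has at most $10$ admissible neighbours, given by an explicit list. This sets up a backtracking search: one keeps a finite set $\mathcal L$ of translation classes together with the list of their facets that are not yet shared with a second class, and at each step one picks such an ``unpaired'' facet $F$ of some $S\in\mathcal L$, moves $S$ to the standard position, branches over the at most $10$ candidates for the simplex across $F$, and for each candidate checks, with the overlap procedure of Section~\ref{Sec_Comput_Tools_-_Test_Adjacency_Simplices}, that all $\ZZ^4$-translates of the classes now in $\mathcal L$ are mutually face-to-face; incompatible candidates are discarded and one recurses on the others. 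A branch succeeds when no unpaired facet is left, in which case $|\mathcal L|=24$ automatically. The search is finite because the branching degree is at most $10$ and, since there can be at most $24$ classes each having $5$ facets, every branch performs only a bounded number of class additions and facet pairings before it closes up or produces an overlap, so its depth is bounded by a constant.

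\emph{Carrying out the search and identifying the outputs.} Using the symmetry group of the standard simplex to cut down the initial cases, the search terminates with exactly four complete triangulations. Three of them are the Delaunay triangulations of $\ZZ^4$ into simplices from \cite{br-1973,rb-1976,eg-2002}, and the fourth is the ``red-triangular'' triangulation of \cite[Example~5.13.1]{Alexeev_semiabelian}. They are pairwise inequivalent under $\GL_4(\ZZ)$: the red-triangular triangulation is not centrally symmetric, hence not Delaunay by the criterion of Section~\ref{Sec_Comput_Tools_-_Test_Delaunay}, while the three Delaunay ones are already distinguished in the classification of \cite{rb-1976,eg-2002}, for instance by their point groups or by the combinatorial types of the stars of a vertex. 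This proves~(i).

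\emph{Proof of (ii).} The arguments of Proposition~\ref{prop:4dim=1} and Theorem~\ref{thm:4dim-neigh} are purely local, so in any partial triangulation of $\ZZ^4$ every simplex has volume $1$ and the $10$-neighbour dictionary applies to each of its facets. A by-product of the search is that it has no dead leaves: a locally consistent patch that does not close up is rejected only when it forces an overlap, so every patch that can actually occur extends to one of the four triangulations; in particular, if $F$ is a facet of a simplex $S_1$ of a partial triangulation $\mathcal M$ and the region immediately across $F$ is not yet covered, then at least one of the $\le 10$ candidates across $F$ is compatible with $\mathcal M$. Now let $\mathcal{PT}$ be arbitrary and, among all partial triangulations containing it, pick a maximal one $\mathcal M$ (a chain has its union as an upper bound, so Zorn's lemma applies). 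If $\mathcal M$ did not tile $\RR^4$, choose a point in the relative interior of a boundary facet $F$ of $|\mathcal M|$; then $F$ is a facet of a unique $S_1\in\mathcal M$, the other side of $F$ is uncovered, and adding the compatible candidate across $F$ enlarges $\mathcal M$, a contradiction. Hence $\mathcal M$ is a full triangulation of $\ZZ^4$ extending $\mathcal{PT}$.

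\emph{Main obstacle.} The delicate points are the two ``completeness'' claims: that the backtracking search is genuinely exhaustive and finite, so that its output is provably the full list of four; and, for~(ii), that no partial triangulation can box itself in, i.e.\ at every boundary facet one of the dictionary entries really is insertable. Both reduce to finite case analysis of the local configurations, and this is where computer assistance, or a lengthy hand argument in the spirit of Section~\ref{sec:3dim}, is needed.
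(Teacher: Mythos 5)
Your proposal follows essentially the same route as the paper: Proposition~\ref{prop:4dim=1} and Theorem~\ref{thm:4dim-neigh} reduce the problem to a finite, computer-assisted backtracking search over volume-one simplices with at most ten admissible neighbours per facet, the search outputs the three Delaunay triangulations together with the red-triangular one, and (ii) is read off from the observation that the enumeration never produces a partial triangulation with no admissible extension. Your Zorn's-lemma packaging of (ii) is only a slightly more formal wrapper around that same computational fact, which --- exactly as in the paper --- is the step that genuinely requires the machine (or a lengthy hand analysis in the style of Section~\ref{sec:3dim}).
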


\begin{proof}
We use Lemma \ref{thm:4dim-neigh} with exact classification of neighbors
for an exhaustive computer-assisted search.
We start from one simplex of volume $1$ and add adjacent simplices one by one by considering
all possibilities.
The number of cases to consider is kept down by keeping only non-isomorphic partial tilings in memory.
The software is available at \cite{DataPeriodicTrig} as a GAP package.
In the end we get four non-equivalent triangulations three of which are Delaunay triangulations
and the ``red-triangular'' triangulation \cite[Example 5.13.1]{Alexeev_semiabelian} which proves (i).

The intermediate object of the enumeration are exactly the partial triangulations of $\ZZ^4$.
It turns out that in the enumeration it never happenned that a partial triangulation had no extensions
by adding simplex which proves (ii).
\end{proof}

\section{Local approach in higher dimensions}\label{sec:dim5}

In this section we show that local approach we used in Lemma \ref{lem:3dim-neigh}
and Theorem \ref{thm:4dim-neigh} can not prove finiteness of non-equivalent
triangulations in dimension at least $5$.

\begin{theorem}\label{THM_infinite_sequence_intersection}
For $n\geq 5$ there exist a simplex $S$ of volume $1$ and an infinite sequence $S_k$
of simplices of volume $1$ such that $S\cap S_k$ is a facet and the translates of
$S$ and $S_k$ are not intersecting.
\end{theorem}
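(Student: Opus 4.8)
The goal is to exhibit, for $n\geq 5$, one simplex $S$ of volume $1$ together with infinitely many distinct simplices $S_k$ of volume $1$, each sharing a fixed facet $F$ with $S$, and such that the full $\ZZ^n$-translate arrangements of $S$ and of $S_k$ are mutually non-overlapping (in the sense of Section \ref{Sec_Comput_Tools_-_Test_Adjacency_Simplices}, i.e. no translate of $S$ meets the interior of a translate of $S_k$, and all intersections are faces). Since the phenomenon is dimension-monotone — if it happens in dimension $5$ it happens in all higher dimensions by taking a prism/cone-type product with a standard simplex in the extra coordinates, or simply by embedding — I would first reduce to $n=5$ and construct the example there explicitly in coordinates.

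\emph{Step 1: Choose $S$ and $F$.} Take $S$ to be a unimodular simplex, say with vertices $0,e_1,\dots,e_5$, and let $F$ be the facet spanned by $e_1,\dots,e_5$ (omitting the origin). A candidate adjacent simplex is $S_k=\tconv(F\cup\{v_k\})$ where $v_k$ is an integer point with $v_k\cdot(1,1,1,1,1)=2$ (so that $S_k$ lies on the far side of $F$ and, if it has volume $1$, tiles correctly against $S$ across $F$). The natural infinite family is obtained by letting two of the barycentric-type coordinates grow in opposite directions, e.g. $v_k=(1,1,1,k,1-k)$ for $k\geq 1$ (up to the $\GL_5(\ZZ)$-symmetry of $F$); one checks $\vol(S_k)=1$ directly from the determinant. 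This mirrors exactly the ``$(1,1,1,-1)$ versus forbidden'' dichotomy that appears at the end of the proof of Theorem \ref{thm:4dim-neigh}: in dimension $4$ the point $(1,1,a,-a)$ was eliminated for $a\geq 3$ by a face-to-face obstruction, and the content of the present theorem is that in dimension $5$ the analogous obstruction disappears.

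\emph{Step 2: Verify the non-overlap condition.} This is the main obstacle and the heart of the argument. For the pair $(S,S_k)$ one must show: (a) no translate $S+u$ with $u\in\ZZ^n\setminus\{0\}$ meets the interior of $S_k$, and symmetrically for translates of $S_k$ against $S$; (b) every nonempty intersection $(S+u)\cap(S_k+w)$ is a common face. For (a) the strategy is to use the facet inequalities of $S$: $S$ is cut out by $x_i\geq 0$ ($i=1,\dots,5$) and $\sum x_i\leq 1$, so a translate $S+u$ can meet the interior of $S_k$ only if $S_k$ has a point with $x_i>u_i$ for all $i$ and $\sum x_i< 1+\sum u_i$; because the vertices of $S_k$ are $e_1,\dots,e_5$ and $v_k=(1,1,1,k,1-k)$, all of whose coordinates are bounded below by $1-k$ and whose coordinate sums are bounded, one enumerates the finitely many integer vectors $u$ that are not immediately excluded and checks each. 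The subtle point is the pair of coordinates that grow with $k$: one must show that the negative fourth/fifth coordinate excursion of $v_k$ does not create a lattice translate slipping into the interior. I expect this to reduce to the observation that $e_4$ and $e_5$ (two vertices of $F$ itself) already ``guard'' those directions, so the relevant $u$'s live in a fixed finite box independent of $k$. Part (b) is then a routine face-lattice check on that same finite list.

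\emph{Step 3: Distinctness and conclusion.} The $S_k$ are pairwise non-$\ZZ^n$-equivalent and indeed pairwise distinct as point sets because $v_k$ has unbounded coordinates while $e_1,\dots,e_5$ are fixed; more carefully, any translation carrying $F$ to itself is trivial (the only integer translation fixing $\tconv(e_1,\dots,e_5)$ setwise is the identity), so $S_k=S_\ell$ forces $v_k=v_\ell$. Combining Steps 1–3 gives an infinite family of volume-$1$ simplices all admissible as the neighbor of $S$ across $F$, which is exactly the assertion; the passage to $n>5$ is handled by the reduction of Step 1. I would close by remarking that this does not produce an infinite family of periodic \emph{triangulations} — only infinitely many \emph{locally} admissible neighbors — which is precisely why the local enumeration method of Sections \ref{sec:3dim} and \ref{sec:4dim} breaks down, and why finiteness in dimension $5$ remains open.
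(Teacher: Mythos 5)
There is a genuine gap, in two places. First, your explicit family is wrong: with $F=\tconv(e_1,\dots,e_5)$ lying in the hyperplane $\sum_i x_i=1$, the apex must satisfy $\sum_i (v_k)_i=2$ for $S_k$ to have volume $1$, as you yourself state; but $v_k=(1,1,1,k,1-k)$ has coordinate sum $4$, so $\vol(S_k)=3$ and the theorem's hypothesis already fails. Worse, the natural corrections in the direction you indicate do not work either: for $v=(1,1,a,-a,0)$ (sum $2$) with $a\geq 2$, the point $\frac{1}{a}v+\frac{a-1}{a}e_1=(1,\frac1a,1,-1,0)$ lies in the relative interior of the edge $[e_1,v]$ of $S_k$ and, after translating by $(-1,0,-1,1,0)$, equals $(0,\frac1a,0,0,0)$, which lies in the relative interior of the edge $[0,e_2]$ of $S$; these are distinct edges meeting in an interior point, so the translates intersect improperly. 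This is exactly the obstruction that kills $(1,1,a,-a)$ in the proof of Theorem \ref{thm:4dim-neigh}, and it does not ``disappear'' in dimension $5$ for that family. A family that does work is $v_k=(1,1,1,k,-1-k)$ (note $-1-k$, not $1-k$), which is the paper's example $X'=(1,1,1,1,k+1)$ attached across the facet $\tconv(0,e_2,\dots,e_5)$ of $\tconv(-e_1,0,e_2,\dots,e_5)$, transported to your coordinates. The precise offsets matter.

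Second, even granting a correct family, your Step 2 is the entire mathematical content of the theorem and you do not carry it out. The crux is that the set of translation vectors $u$ that could produce an improper intersection is not obviously bounded independently of $k$, because the apex has coordinates growing linearly in $k$; your assertion that $e_4$ and $e_5$ ``guard'' those directions and that the relevant $u$ live in a fixed finite box is stated as an expectation, not proved (and for your stated family it is false, by the obstruction above). The paper's proof handles this with a genuine argument: it shows the candidate translation point $(0,b,c,d,e)$ must lie in the convex hull of ten explicit points (the five fixed vertices of the shared facet plus the five intersections of the edges at the far apex with the facet hyperplane), which forces $b,c,d\in\{0,1\}$, and then uses the central symmetry of that convex hull to reduce to checking three lattice points. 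Without an argument of this kind, the claim of non-intersection for all $k$ simultaneously is unsupported. Your Step 3 (distinctness) and the reduction from $n>5$ to $n=5$ are fine and match the paper.
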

\proof We first consider the case $n=5$.

We fix simplex $S=OABCDX$ where $O=(0,0,0,0,0)$, $X=(-1,0,0,0,0)$, $A=(0,1,0,0,0)$,
$B=(0,0,1,0,0)$, $C=(0,0,0,1,0)$, and $D=(0,0,0,0,1)$. We will show that there are
infinitely many options to choose a neighbor $T$ of $S$ adjacent by the facet $x_1=0$
such that $\ZZ^5$ translations of $S$ and $T$ do not violate the face-to-face property. 

Let $X'=(1,1,1,1,k+1)$ for any $k\geq 0$, then $T=OABCDX'$ will satisfy this condition.

For any $n$ both simplices $S$ and $T$ have volume 1, so $S$ doesn't intersect translations of
$S$, and $T$ doesn't intersect translations of $T$. It is enough to show
that an arbitrary integer translation of $S$ doesn't intersect $T$ other than by
vertices or by the facet $x_1=0$.

Consider the translation $S'$ of $S$ by the integer vector $(a,b,c,d,e)$.
Assume the intersection $S'\cap T$ contains a point $\mathbf{x}$ which is not a vertex
of $T$ and has non-zero first coordinate.
Since $S$ satisfies the inequality $-1\leq x_1\leq 0$ and $T$ satisfies the inequality
$0\leq x_1\leq 1$ we must have $a=1$.
Since $\mathbf{x}$ is a point of intersection of $S'$ and $T$, then $\mathbf{x}$
is in the cone with vertex $(0,b,c,d,e)$ generated by the vectors $(1,0,0,0,0)$, $(1,1,0,0,0)$,
$(1,0,1,0,0)$, $(1,0,0,1,0)$, and $(1,0,0,0,1)$, the edges of $S$ from the vertex $X$.

Then the point $(0,b,c,d,e)$ is in the cone with vertex $\mathbf{x}$ generated by negatives
of these vectors. Since $\mathbf{x}$ is in $T$, so $(0,b,c,d,e)$ is in the convex hull
of the $6$ cones with vertices at vertices of $T$ generated by the vectors $(-1,0,0,0,0)$,
$(-1,-1,0,0,0)$, $(-1,0,-1,0,0)$, $(-1,0,0,-1,0)$, and $(-1,0,0,0,-1)$.
We are interested only in the part of the convex hull of these 6 cones in the plane $x_1=0$,
and the extremal points of these cones are 5 vertices of $T$
(except $(1,1,1,1,k+1)$) and the points $(0,1,1,1,k+1)$, $(0,0,1,1,k+1)$, $(0,1,0,1,k+1)$,
$(0,1,1,0,k+1)$, and $(0,1,1,1,k)$. These are $5$ points of intersection of the edges of the
cone with the vertex at $(1,1,1,1,k+1)$ with the hyperplane $x_1=0$.

Thus the point $(0,b,c,d,e)$ is in the convex hull of the $10$ points $(0,0,0,0,0)$,
$(0,1,0,0,0)$, $(0,0,1,0,0)$, $(0,0,0,1,0)$, $(0,0,0,0,1)$, $(0,1,1,1,n+1)$, $(0,0,1,1,n+1)$,
$(0,1,0,1,n+1)$, $(0,1,1,0,n+1)$, and $(0,1,1,1,n)$.

Now we can see that $b,c,d$ must be 0's or 1's. Since the convex hull is
centrally symmetric with respect to the point $\left(0,\frac12,\frac12,\frac12,\frac{k+1}{2}\right)$
we can assume that $b=c=0$.
Then the point $(0,b,c,d,e)$ must be in the convex hull of only three points $(0,0,0,0,0)$,
$(0,0,0,1,0)$, and $(0,0,0,0,1)$, and there is no such point except themselves.
None of these points is in the interior of the convex hull of the 10 points above,
so we have found a contradiction with existence of such a point $\mathbf{x}$.

For $n > 5$ the idea is simply to take the pair of simplices and simply add another
point. \qed


\section{Flipping and five-dimensional partial enumeration}\label{sect:flip}

Let us consider a periodic triangulation $\mathcal{T}$ of $\ZZ^n$. Given a simplex $S$ and a facet $F$ of $S$,
we can consider the adjacent simplex $S(F)$ to $S$. The union of the vertex sets of $S$ and $S(F)$
is a set of $n+2$ points and we call the convex hull of those $Cv(S,F)$.

\begin{lemma}\label{LEM_repartitioning_polytope}
Any $n$-dimensional convex polytope with $n+2$ vertices (called {\em repartitioning polytope}) admits exactly $2$ triangulations.
\end{lemma}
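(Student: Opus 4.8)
The plan is to use Radon's partition theorem (or equivalently, Gale duality / the theory of circuits for point configurations). Let $P$ be an $n$-dimensional polytope with vertex set $V = \{p_0, p_1, \ldots, p_{n+1}\}$ consisting of $n+2$ points in $\RR^n$. Since there are $n+2 = (n+1)+1$ points, they are affinely dependent, and up to scaling there is a unique affine dependence $\sum_{i} \lambda_i p_i = 0$, $\sum_i \lambda_i = 0$, with not all $\lambda_i$ zero. This gives a Radon partition $V = V^+ \sqcup V^0 \sqcup V^-$ according to the sign of $\lambda_i$. Because $P$ is full-dimensional with $n+2$ vertices (every point is a vertex), I expect that $V^0 = \emptyset$ and both $V^+$ and $V^-$ are nonempty and affinely independent; I would verify this carefully, since it is exactly the point where ``vertices'' (as opposed to arbitrary points) matters — if some $\lambda_i = 0$ the point would be redundant in the dependence, but one must check it cannot be a non-vertex either.

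The two triangulations are then the two ``pulling'' triangulations associated with the two sides of the circuit:
\begin{equation*}
{\mathcal T}^+ = \{ \tconv(V \setminus \{p_i\}) : i \in V^+ \}, \qquad {\mathcal T}^- = \{ \tconv(V \setminus \{p_j\}) : j \in V^- \}.
\end{equation*}
I would check directly that each of these is indeed a triangulation of $P$: each $\tconv(V \setminus \{p_i\})$ for $i \in V^+$ is an $n$-simplex (by affine independence of the remaining $n+1$ points, which follows from the uniqueness of the dependence), their union is $P$, and they meet face-to-face — the standard verification uses the signs of the barycentric-type coordinates coming from the circuit relation. This establishes existence of (at least) two triangulations, and that they are distinct since ${\mathcal T}^+$ uses the simplices omitting a vertex in $V^+$ while ${\mathcal T}^-$ omits a vertex in $V^-$, and $V^+ \cap V^- = \emptyset$.

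For uniqueness — that there are \emph{exactly} two — I would argue that any triangulation ${\mathcal T}$ of $P$ using only the vertices $V$ must be one of ${\mathcal T}^{\pm}$. The key observation is that any full-dimensional simplex spanned by a subset of $V$ is of the form $\tconv(V \setminus \{p_i\})$ for a single omitted vertex $p_i$ (since $|V| = n+2$), so a triangulation is specified by the set $I \subseteq \{0,\ldots,n+1\}$ of omitted vertices, with $|I| \geq 2$. One then shows, using the circuit relation, that the only subsets $I$ for which the corresponding simplices tile $P$ face-to-face are $I = V^+$ and $I = V^-$: if $I$ contains both a $+$ and a $-$ index the two corresponding simplices overlap in their interiors (their interiors both contain a neighborhood of the Radon point $\sum_{i\in V^+}\lambda_i p_i = -\sum_{j\in V^-}\lambda_j p_j$ suitably normalized), while if $I \subsetneq V^+$ (say) the simplices fail to cover all of $P$.

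The main obstacle I anticipate is the careful bookkeeping in the uniqueness step — precisely ruling out ``mixed'' index sets and proper subsets — rather than any conceptual difficulty; this is classical (it is the $n+2$ case underlying the theory of regular triangulations and flips, see e.g. the Gel'fand--Kapranov--Zelevinsky / De Loera--Rambau--Santos framework), so an efficient writeup may simply invoke the circuit structure and state that the two triangulations are the two ``half-circuit'' triangulations. If a more self-contained proof is wanted, the cleanest route is probably to set up the single affine dependence explicitly, normalize so $\sum_{i \in V^+} \lambda_i = 1 = -\sum_{j \in V^-} \lambda_j$, and read off directly which subsimplices a generic point of $P$ lies in.
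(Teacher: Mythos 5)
Your proposal follows the same route as the paper's proof: the unique affine dependence among the $n+2$ vertices, with the two triangulations given by the simplices omitting a vertex from the positive (respectively negative) part of the circuit. In fact the paper's proof is terser than yours -- it only exhibits the two candidate triangulations and does not argue that they are triangulations or that there are no others -- so your uniqueness step is a genuine addition rather than a divergence.

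Two concrete corrections. First, your expectation that $V^0=\emptyset$ is false: a pyramid over a square is a $3$-polytope with $5$ vertices whose unique affine dependence has coefficient $0$ at the apex (the dependence lives entirely on the four coplanar base vertices). Fortunately nothing in your argument actually needs $V^0=\emptyset$: $V^+$ and $V^-$ are nonempty because the coefficients sum to $0$ and are not all zero; each of $V^+$, $V^-$ is affinely independent because it is a proper subset of the circuit $V^+\cup V^-$ (the minimal dependent set); a set $V\setminus\{p_j\}$ with $\lambda_j=0$ still carries the dependence and hence is degenerate, so it can never occur in a triangulation; and the two triangulations ${\mathcal T}^{\pm}$ only ever omit vertices from $V^+\cup V^-$. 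So you should delete that claim rather than try to verify it. Second, in the ``mixed'' case of your uniqueness argument the justification via the Radon point is not quite right: in the square-pyramid example the Radon point is the centre of the base, which lies on the boundary of every $S_j$, not in any interior. The clean argument is this: for $i\in V^+$ and $j\in V^-$ the $n$ points $V\setminus\{p_i,p_j\}$ are affinely independent (they sit inside the affinely independent set $V\setminus\{p_i\}$) and span a hyperplane $H$; applying an affine functional $h$ vanishing on $H$ to the dependence gives $\lambda_i h(p_i)+\lambda_j h(p_j)=0$, so opposite signs of $\lambda_i,\lambda_j$ force $p_i$ and $p_j$ onto the same side of $H$ and $S_i$, $S_j$ overlap in a full-dimensional set, while equal signs put them on opposite sides so that $S_i\cap S_j$ is the common face $\tconv(V\setminus\{p_i,p_j\})$. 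With these repairs your proof is complete, and on the uniqueness half it is more complete than the one in the paper.
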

\begin{proof}
Suppose that the vertices are $v_1$, \dots, $v_{n+2}$ then there is exactly one linear relation of the form
\begin{equation*}
a_1 v_1 + \dots + a_{n+2} v_{n+2} = 0
\end{equation*}
up to a non-zero multiple. 
For $1\leq j\leq n+2$ we define $S_j$ the simplex formed by $v_i$ for $i\in \{1,\dots,n+2\} - \{ j\}$.
The first triangulation is formed by the simplices $S_j$ for $j$ such that $a_j \geq 0$ and the second
triangulation by the simplices $S_j$ for $j$ such that $a_j \leq 0$.
\end{proof}

Suppose now that the simplices of $T$ contained in $Cv(S,F)$ have determined a tiling of it.
Then the simplices in $Cv(S,F)$ form a triangulation and we can swap it into another triangulation.
Unfortunately things are not always so simple. If we have $a_j=0$ for some $j$ then the vertices $v_i$
for $i\in \{1,\dots,n+2\} - \{j\}$ define a $(n-1)$-dimensional polytope with $n+1$ vertices.
Therefore flipping the triangulation of $Cv(S,F)$ also flips the triangulation of the facets of
$Cv(S,F)$. The set $Irr(S,F)= \tconv\{v_i | a_i\not= 0\}$ defines a face of $Cv(S,F)$.
Thus if one flips the triangulation in $Cv(S,F)$, then one needs to flip it in all repartitioning
polytopes containing $Irr(S,F)$ as well. We call a family of such flips {\it coherent}.
This kind of flip is sometimes called {\em bistellar flip} in the literature.

Note that in the case of Delaunay triangulations the flips that are considered are formed by several
bistellar flips done at the same time.

\begin{theorem}\label{THM_950_triangulation}
There are at least $950$ periodic triangulations of $\ZZ^5$ up to $\GL_5(\ZZ)$ equivalence.
\end{theorem}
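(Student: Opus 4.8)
The plan is to produce the list of $950$ triangulations by a computer-assisted flip search and then certify that they are pairwise non-equivalent. I would start from a known periodic triangulation of $\ZZ^5$---most naturally one arising from a generic Delaunay triangulation, obtained by applying $Ref_A(\cdot)$ of Lemma \ref{Refinement_Theorem} to a generic positive definite form $A$, or alternatively the standard triangulation of the cube into $5!$ simplices with a generic perturbation---and use it as the seed of a breadth-first exploration. At each step, for a current triangulation $\mathcal{T}$ and each simplex $S$ with facet $F$, I would form the repartitioning polytope $Cv(S,F)$ (the convex hull of the $n+2$ vertices of $S$ and its neighbor $S(F)$), compute the unique affine dependence $\sum a_i v_i = 0$, $\sum a_i = 0$, identify the irrelevant face $Irr(S,F)$, collect all repartitioning polytopes of $\mathcal{T}$ containing $Irr(S,F)$, and perform the coherent bistellar flip described just before the theorem. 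Each such flip produces a new periodic triangulation; I would reduce it modulo $\GL_5(\ZZ)$ using the equivalence and stabilizer machinery of Subsection \ref{Sec_Comput_Tools_-_Test_Equivalence}, and if it is new, add it to the queue.

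The key steps, in order, are: (1) fix a concrete seed triangulation and verify it is periodic and face-to-face; (2) implement the detection of repartitioning polytopes $Cv(S,F)$ and of the coherent families of flips grouped by their common face $Irr(S,F)$, checking in each case that the resulting tiling is again a legitimate periodic triangulation (this uses the adjacency test of Subsection \ref{Sec_Comput_Tools_-_Test_Adjacency_Simplices} to confirm that the new simplices tile $\RR^5$ face-to-face and periodically); (3) run the orbit-enumeration loop, storing one representative per $\GL_5(\ZZ)$-class and never revisiting a class; (4) when the queue is exhausted within the reachable component, report the count. Since we only claim a lower bound ``at least $950$,'' it suffices to exhibit $950$ distinct equivalence classes together with a witness flip-path connecting them to the seed; we do not need to argue that the flip graph is connected or that the search has terminated globally. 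The final bookkeeping step is to certify the count: run all $\binom{950}{2}$ pairwise equivalence tests (or, more cheaply, compute a complete $\GL_5(\ZZ)$-invariant for each triangulation and check the $950$ invariants are distinct) to guarantee no two representatives coincide.

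I would also include, as byproducts of this enumeration, the verification that some of the $950$ triangulations are centrally symmetric but fail the Delaunay test of Subsection \ref{Sec_Comput_Tools_-_Test_Delaunay} (the polyhedral cone $\mathcal{P}$ of Voronoi regulators is not full-dimensional), and that some fail the regularity obstruction of Subsection \ref{Sec_Comput_Tools_-_Test_Regularity} (the strengthened polyhedron $\mathcal{Q}$ is proven empty by linear programming on a finite subcomplex); these are the facts advertised in the introduction and in Section \ref{sect:flip}, and they follow from the same data set.

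The main obstacle will be computational cost and correctness of the flip bookkeeping, not mathematical depth. Unlike dimensions $3$ and $4$, there is no a priori bound on the number of simplices adjacent to a given simplex (Theorem \ref{THM_infinite_sequence_intersection}), so the search cannot be organized around a finite local classification; one is forced to work with the flip graph and simply explore as far as resources allow, which is why the theorem is stated as a lower bound. Within that framework the delicate point is step (2): when $a_j = 0$ for some index, the flip at $Cv(S,F)$ must be performed simultaneously on every repartitioning polytope sharing the face $Irr(S,F)$, and getting this coherence right---together with correctly handling the periodicity, so that the flip is applied consistently to every $\ZZ^5$-translate---is where an implementation is most likely to err. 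The appropriate safeguard is to re-run the full face-to-face and periodicity checks of Subsection \ref{Sec_Comput_Tools_-_Test_Adjacency_Simplices} on each triangulation produced, so that only genuinely valid periodic triangulations enter the count of $950$; the software realizing this is available at \cite{DataPeriodicTrig}.
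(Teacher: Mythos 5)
Your proposal matches the paper's proof: a computer-assisted breadth-first search by coherent bistellar flips starting from a Delaunay triangulation of $\ZZ^5$, with reduction modulo $\GL_5(\ZZ)$ at each step, terminating (empirically) at $950$ classes. The extra safeguards you describe (re-verifying face-to-face periodicity, certifying pairwise non-equivalence) are sensible implementation details but do not change the argument.
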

\begin{proof} 
Given a periodic triangulation of $\ZZ^5$ we consider all ways
to do a coherent flipping on it.
We thus obtain a set of new periodic triangulations.
We insert element of this list into the list of known periodic triangulations
if they are not isomorphic to a triangulation already known.
We start from one arbitrary Delaunay triangulation of $\ZZ^5$.
We finish when all periodic triangulations in the list have been treated.
Since the finiteness of the set of periodic triangulation is not proved in
dimension $5$ this process was not guaranteed to terminate.
But it did and yielded $950$ periodic triangulations.
The code is available at \cite{DataPeriodicTrig}. 
\end{proof}

The list of $950$ periodic triangulations ($222$ of them Delaunay) is interesting in its own right
and is available at \cite{DataPeriodicTrig}.
The volumes of the simplices in the list of $950$ triangulations are $1$ or $2$ which corresponds to the possible volume of simplices in Delaunay tessellations.
Given a simplex $S$ of volume $1$ and vertices $v_0$, \dots, $v_5$ we can consider which simplices $S'$ can be adjacent to $S$. Their vertex set will be of the form
\begin{equation*}
\{w\}\cup \{v_j\}_{0\leq j\leq 5, j\not= 5} \mbox{~with~} w = \sum_{j=0}^5 b_j v_j \mbox{~and~} 1 = \sum_{j=0}^5 b_j
\end{equation*}
Thus we can encode them by a pair $\{(b_0, \dots, b_5), i\}$. Up to permutation with the list of $950$ possible tilings we found following possibilities for the pairs:
\begin{equation*}
\begin{array}{ccc}
\{(-1,1,1,0,0,0), 0\}       &      \{(-1,-1,1,1,1,0), 0\}     &    \{(-1,-1,-1,1,1,2), 0\}\\
\{(-2,-1,1,1,1,1), 1\}      &      \{(-1,-1,-1,-1,2,3), 0\}    
\end{array}
\end{equation*}
The last possibility $\{(-1,-1,-1,-1,2,3),0\}$ does not show up in the case of Delaunay triangulations.

The symmetry of the tiling varies widely with one of the periodic tiling having a point
group symmetry isomorphic to the symmetric group $\Sym(6)$.

\begin{theorem}\label{THM_notdelaunay_centrallysymmetric}
Periodic triangulations of $\ZZ^n$ which are not Delaunay but are centrally symmetric exist for $n\geq 5$.
\end{theorem}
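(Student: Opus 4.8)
The plan is to exhibit an explicit example for $n = 5$ and then bootstrap to higher dimensions, exactly in the spirit of the final step of Theorem \ref{THM_infinite_sequence_intersection}. Concretely, I would first invoke Theorem \ref{THM_950_triangulation}: among the $950$ periodic triangulations of $\ZZ^5$ produced by the flipping algorithm, run the two relevant tests from Section \ref{Sec_Comput_Tools}. Testing the Delaunay property is done via the Voronoi regulator cone $\mathcal{P}$ of Subsection \ref{Sec_Comput_Tools_-_Test_Delaunay}: a triangulation is non-Delaunay precisely when $\mathcal{P}$ is not full-dimensional. Testing central symmetry amounts to checking, using the stabilizer/equivalence machinery of Subsection \ref{Sec_Comput_Tools_-_Test_Equivalence}, whether the point group $Pt(\mathcal{T})$ contains the element $-\mathrm{id}$, i.e. whether $x \mapsto -x + v$ is a symmetry for some $v \in \ZZ^5$. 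The claim for $n = 5$ is then that at least one of the $950$ triangulations passes the second test but fails the first; I would either cite the computational data at \cite{DataPeriodicTrig} or display one small representative triangulation explicitly (a handful of simplices up to translation) and verify by hand that its translates of the simplex of volume $2$ force a violation of the empty-sphere condition while central symmetry is manifest from the vertex list.

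Next I would reduce general $n \geq 5$ to the case $n = 5$ by a suspension/product construction. Given a centrally symmetric, non-Delaunay periodic triangulation $\mathcal{T}_5$ of $\ZZ^5$, form a periodic triangulation of $\ZZ^{n}$ by taking $\mathcal{T}_5 \times \mathcal{T}_{\mathrm{std}}$, where $\mathcal{T}_{\mathrm{std}}$ is the standard unimodular triangulation of $\ZZ^{n-5}$ (the one into simplices with vertices $0, e_{i_1}, e_{i_1}+e_{i_2}, \dots$), and then refine the resulting prism decomposition into simplices in the canonical staircase way so that it stays face-to-face and $\ZZ^n$-periodic. This product triangulation inherits central symmetry from the central symmetry of both factors. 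To see it is not Delaunay, note that a Delaunay triangulation of $\ZZ^n$ restricts, on each affine $5$-plane spanned by a coset of the $\ZZ^5$ factor, to a Delaunay-type structure with respect to the induced form; if the product were Delaunay for some positive definite $A$ on $\ZZ^n$, the restriction of $A$ to the $\ZZ^5$ factor would realize $\mathcal{T}_5$ as its Delaunay triangulation, contradicting the choice of $\mathcal{T}_5$. Alternatively, and perhaps more cleanly, I would follow the remark at the end of Theorem \ref{THM_infinite_sequence_intersection} and simply "add another point'': take the bad local configuration of simplices in $\mathcal{T}_5$ that already violates the empty-sphere inequality, cone it with the extra $n-5$ standard directions, and observe the obstruction survives.

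The main obstacle I anticipate is the non-Delaunay verification, both for $n=5$ and in the reduction step. For $n = 5$ this is genuinely a computation — one must show the Voronoi regulator cone collapses to lower dimension — so the honest proof either leans on the verified code at \cite{DataPeriodicTrig} or requires carefully selecting a small enough witness triangulation that the incompatibility of the two empty-sphere inequalities coming from the two simplices sharing the large-volume simplex's facets can be checked by a short hand calculation. For the inductive step, the subtlety is that "restriction of a Delaunay triangulation to a sublattice coset need not be Delaunay'' in general, so one has to argue more carefully: the correct statement is that the lower-envelope/regularity structure does restrict, and since Delaunay implies regular (stated after Definition \ref{DEF_regularity}), one can argue at the level of regularity and then separately rule out the Delaunay property using the explicit empty-sphere failure transported from the $5$-dimensional factor. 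Everything else — producing the product triangulation, checking it is face-to-face and periodic, checking central symmetry — is routine.

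Finally, I would record that by the same construction the family of such triangulations is nonempty in every dimension $n \geq 5$, and note in passing (using Proposition \ref{prop:volume}(ii) and Proposition \ref{prop:4dim=1}) that the phenomenon genuinely begins at $n = 5$: for $n \leq 4$ every periodic triangulation is Delaunay by the classification of Sections \ref{sec:3dim} and \ref{sec:4dim}, so the bound $n \geq 5$ in the statement is sharp.
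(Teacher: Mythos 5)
Your proposal is correct and follows the same two-step outline as the paper: for $n=5$, point to the computational list of Theorem \ref{THM_950_triangulation} and certify one of its members as centrally symmetric but non-Delaunay (the paper records that $23$ of the $950$ have this property), and for $n>5$ lift that example by a prism construction over $\Delta\times[0,1]^{n-5}$. The one place you diverge is the refinement of the prisms: you triangulate them by a canonical staircase, whereas the paper applies $Ref_A$ from Lemma \ref{Refinement_Theorem}(iii) with a generic positive definite form $A$. The paper's choice is slightly slicker on the symmetry question, since $x\mapsto -x$ preserves every quadratic form, so central symmetry of the refined triangulation is immediate from Lemma \ref{Refinement_Theorem}(ii); with the staircase you need to check that your vertex orderings are compatible with order reversal under $-\mathrm{id}$, which works but is an extra (routine) verification you should not wave away. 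On the other hand, you address a point the paper silently omits: why the lifted triangulation is still non-Delaunay for $n>5$. Your worry that restrictions of Delaunay subdivisions to lattice subspaces are not Delaunay in general is well taken, but in this situation the $5$-faces of the lifted triangulation lying in $\RR^5\times\{0\}$ are exactly the simplices of ${\mathcal T}_5$ and they tile that subspace; each is a face of a Delaunay cell, so its circumsphere in the slice is the intersection of an empty sphere with the slice and is therefore empty in $\ZZ^5$, forcing ${\mathcal T}_5$ to be Delaunay for the restricted form --- a contradiction. Spelling that out (or your alternative of transporting the explicit empty-sphere violation) closes a gap that the published proof leaves implicit, so this is a welcome addition rather than a defect.
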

\begin{proof}
For $n=5$ it suffices to take one of the $23$ triangulations out of $950$ known in dimension $5$
that are not Delaunay but are centrally symmetric.
For $n>5$ this tiling ${\mathcal T}$ can be extended with tiles of the form $\Delta \times [0,1]^{n-5}$
for $\Delta$ a $5$-dimensional simplex of ${\mathcal T}$.
By applying Lemma \ref{Refinement_Theorem} (iii) for an arbitrary generic quadratic form
we obtain a $\ZZ^n$-periodic triangulation.
This triangulation is centrally symmetric since $x\mapsto -x$ is a symmetry of the original
tiling but also of the quadratic form.
\end{proof}

Note that existence of a periodic centrally
symmetric non-Delaunay triangulation for $n=8$ was established in \cite{LocalCoveringOptimality}.

\begin{theorem}\label{THM_nonregular_triangulation}
There exist non-regular periodic triangulations for $n\geq 5$.
\end{theorem}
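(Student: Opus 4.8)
The plan is to produce an explicit non-regular periodic triangulation of $\ZZ^5$ from the list of $950$ triangulations obtained in Theorem \ref{THM_950_triangulation}, and then propagate it to all $n\geq 5$ by the product-and-refinement trick used in Theorem \ref{THM_notdelaunay_centrallysymmetric}. First I would invoke the computational test of Section \ref{Sec_Comput_Tools_-_Test_Regularity}: among the $950$ known triangulations of $\ZZ^5$, at least one — say ${\mathcal T}_0$ — has the property that the finite linear program ${\mathcal Q}$ built from a suitable finite subset ${\mathcal S}$ of its simplices is infeasible. By the argument recalled there (if a global lifting function $f$ existed, its restriction to the vertices of ${\mathcal S}$ could be rescaled to satisfy the strengthened inequalities $f(v)\geq \phi_{S_1}(v)+1$), the emptiness of ${\mathcal Q}$ — certified by a Farkas/LP duality witness which the software in \cite{DataPeriodicTrig} can output — proves that ${\mathcal T}_0$ admits no such $f$, hence is not regular. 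This settles $n=5$.

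Next I would handle $n>5$. Take the non-regular ${\mathcal T}_0$ of $\ZZ^5$ and form the periodic tiling ${\mathcal T}_0\times[0,1]^{n-5}$ of $\ZZ^n$, whose tiles are prisms $\Delta\times[0,1]^{n-5}$ over the simplices $\Delta$ of ${\mathcal T}_0$; this is a face-to-face periodic polytopal tiling of $\ZZ^n$ in the sense of Definition \ref{DEF_ScalP_extension}. Choosing a generic positive definite quadratic form $A$ and applying Lemma \ref{Refinement_Theorem}(iii), $Ref_A({\mathcal T}_0\times[0,1]^{n-5})$ is a $\ZZ^n$-periodic triangulation. The key point is that its restriction to any slab $\RR^5\times\{c\}$ over a lattice point $c\in\ZZ^{n-5}$, intersected appropriately, still ``sees'' ${\mathcal T}_0$: more precisely, the $5$-dimensional faces lying in the hyperplanes $x_6=\dots=x_n=0$ form exactly (a copy of) ${\mathcal T}_0$, because the lower facets of $Scal(\Delta\times[0,1]^{n-5})$ lying over $\tvert(\Delta)\times\{0\}$ project onto a triangulation of $\Delta$ — and $\Delta$ being already a simplex, that triangulation is $\Delta$ itself.

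The main obstacle is the last step: transferring non-regularity from the $5$-dimensional sub-triangulation to the ambient $n$-dimensional triangulation. One has to argue that a global convex lifting $f:\ZZ^n\to\RR$ inducing $Ref_A({\mathcal T}_0\times[0,1]^{n-5})$ would, by restriction to $\ZZ^5\times\{0\}$ and subtraction of the affine part of $A$ in the last $n-5$ coordinates, yield a convex lifting of $\ZZ^5$ inducing ${\mathcal T}_0$ — contradicting $n=5$. The delicate bookkeeping is that the facets of $H(f)$ projecting into the hyperplane $x_6=\dots=x_n=0$ are precisely the simplices of the induced triangulation contained there, and that regularity is inherited by a ``flat'' section of a regular triangulation; I expect this to require a short but careful lemma on restrictions of regular triangulations to coordinate subspaces, phrased so that the contribution of $A$ on the prism directions is an affine function on each fiber and thus does not affect convexity of the section. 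I would state and prove that lemma first, then the theorem follows by combining it with the $n=5$ case and Lemma \ref{Refinement_Theorem}.
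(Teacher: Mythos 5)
Your proposal follows the paper's proof exactly: the $n=5$ case is settled by applying the LP--infeasibility test of Section \ref{Sec_Comput_Tools_-_Test_Regularity} to one of the $950$ triangulations of Theorem \ref{THM_950_triangulation} (the paper uses triangulation number $430$, with a certificate of $3264$ simplices), and the $n>5$ case by forming the prisms $\Delta\times[0,1]^{n-5}$ and refining with a generic positive definite form via Lemma \ref{Refinement_Theorem}(iii). The only difference is that you explicitly identify and correctly sketch the restriction lemma needed to transfer non-regularity from the $5$-dimensional coordinate section back to the ambient triangulation --- a step the paper asserts (``necessarily non-regular'') without justification --- so your version is, if anything, more complete.
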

\begin{proof}
For $n=5$ we apply the method of subsection \ref{Sec_Comput_Tools_-_Test_Regularity}
to one of the $950$ triangulations of Theorem \ref{THM_950_triangulation}.
The list of $3264$ simplices of the triangulation number 430 that cannot be part
of a regular triangulation is available at \cite{DataPeriodicTrig}.
For $n>5$ this tiling ${\mathcal T}$ can be extended with tiles of the
form $\Delta \times [0,1]^{n-5}$ with $\Delta$ a $5$-dimensional simplex of ${\mathcal T}$.
By applying Lemma \ref{Refinement_Theorem} (iii) for an arbitrary generic quadratic form
we obtain a $\ZZ^n$-periodic triangulation which is necessarily non-regular.
\end{proof}

\section{Open problems}\label{sec:open_questions}
In this section we list a number of interesting questions that showed up in the course of this
research.

\subsection{Finiteness and enumeration}
A natural question that we were unable to resolve is whether there are finitely many
$\ZZ^n$-periodic triangulations of $\ZZ^n$ up to the action of $\GL_n(\ZZ)$?
Theorem \ref{THM_infinite_sequence_intersection} shows that a local approach considering only
pairs of simplices will not work.

There are many related question. For example in a fixed dimension $n$, is the set of all periodic
triangulations of $\ZZ^n$ connected by flipping? The resolution of such questions is certainly
very hard since analogue questions about triangulations of the hypercube are still unsolved
\cite{DeLoeraRambauSantos}.
The resolution of the above connectedness would imply that the number of triangulations in
dimension $5$ is exactly $950$.

A proof of finiteness in dimension $5$ would not a priori give an algorithm for
the enumeration since we do not know the possible volume of simplices nor the
adjacencies between them.



\subsection{Extensibility of partial triangulations}
In a lot of contexts of this search we reach a point where we had a partial triangulation
of $\ZZ^n$ and we wanted to extend it to a full triangulation. Is this always possible?
If so what would be a process for obtaining such a triangulation? If this extensibility
were true then we would have an infinity of types of periodic triangulation in dimension $5$.
Note that Theorem \ref{SEC_Classification_dimension4} proves that this extensibility
holds in dimension $n\leq 4$.

One possible way to consider the problem would be following \cite{Chew_ConstrainedDelaunay}
to consider {\em constrained Delaunay triangulations} and see if the relevant notion
could be extended to our case. It would require a twofold generalization: a generalization from
dimension $2$ to any dimension and a generalization to the periodic case.

\subsection{Regularity}

Is every periodic regular triangulations also Delaunay? The answer is not known.
As we saw in Section \ref{Sec_Comput_Tools} we can test regularity on finite subsets
of $\ZZ^n$ by linear programming. 
But we need actually to define the height function all over $\ZZ^n$.
Finding such explicit function is difficult since as soon as we impose some
translational invariance on the function $f$ we obtain a function that is actually
quadratic.


Is the ``red-triangular'' \cite[Example 5.13.1]{Alexeev_semiabelian} $\ZZ^4$-periodic
triangulation regular?
If this triangulation is restricted to a set of $12864$ simplices containing $1224$
points then we can found a corresponding function $f$ which indicates that this
triangulation is likely to be regular.


\subsection{Volume of simplices}

What is the maximum volume of a simplex in a periodic triangulation? So far in all cases
considered, we found that the volumes of the simplices occurring was not higher than
the volume of the simplices of the Delaunay triangulations in the same dimension
which are $1$, $2$, $3$ and $5$, respectively in dimension $n\leq 4$, $5$, $6$ and $7$,
respectively \cite{InhomogeneousPerfect}.
We see no reason why this should always be the case.


%


\section{Acknowledgments}

We thank Francisco Santos and Achill Sch\"urmann for interesting discussions on this work.

\bibliographystyle{amsplain_initials_eprint}
\bibliography{LatticeRef}

\providecommand{\bysame}{\leavevmode\hbox to3em{\hrulefill}\thinspace}
\providecommand{\MR}{\relax\ifhmode\unskip\space\fi MR }
\providecommand{\MRhref}[2]{%
  \href{http://www.ams.org/mathscinet-getitem?mr=#1}{#2}
}
\providecommand{\href}[2]{#2}
\begin{thebibliography}{10}

\bibitem{Alexeev_semiabelian}
V.~Alexeev, \emph{Complete moduli in the presence of semiabelian group action},
  Ann. of Math. (2) \textbf{155} (2002), no.~3, 611--708.

\bibitem{BE}
I.~Baburin and P.~Engel, \emph{On the enumeration of the combinatorial types of
  primitive parallelohedra in {$E^d,\ 2\le d\le6$}}, Acta Crystallogr. Sect. A
  \textbf{69} (2013), no.~5, 510--516.

\bibitem{br-1973}
E.~Baranovskii and S.~Ryshkov, \emph{Primitive five-dimensional
  parallelohedra}, Soviet Math. Dokl. \textbf{14} (1973), 1391--1395,
  Translation from Dokl. Akad. Nauk SSSR 212, 532--535 (1973).

\bibitem{Chew_ConstrainedDelaunay}
L.~Chew, \emph{{Constrained Delaunay Triangulations}}, {Algorithmica}
  \textbf{{4}} ({1989}), no.~{1}, {97--108}.

\bibitem{DeLoeraRambauSantos}
J.~A. De~Loera, J.~Rambau, and F.~Santos, \emph{Triangulations}, Algorithms and
  Computation in Mathematics, vol.~25, Springer-Verlag, Berlin, 2010,
  Structures for algorithms and applications.

\bibitem{DL}
M.~M. Deza and M.~Laurent, \emph{Geometry of cuts and metrics}, Algorithms and
  Combinatorics, vol.~15, Springer, Heidelberg, 2010.

\bibitem{InhomogeneousPerfect}
M.~Dutour~Sikiri\'c, \emph{The seven dimensional perfect {D}elaunay polytopes
  and {D}elaunay simplices}, Canad. J. Math. \textbf{69} (2017), no.~5,
  1143--1168.

\bibitem{DataPeriodicTrig}
M.~{D}utour Sikiri\'c, \emph{Data on periodic triangulation of n-dimensional
  lattice}, 2018,
  \url{http://mathieudutour.altervista.org/PeriodicTriangulation}.

\bibitem{ClassifDim5}
M.~Dutour~Sikiri\'{c}, A.~Garber, A.~Sch\"{u}rmann, and C.~Waldmann, \emph{The
  complete classification of five-dimensional {D}irichlet-{V}oronoi polyhedra
  of translational lattices}, Acta Crystallogr. Sect. A \textbf{72} (2016),
  no.~6, 673--683.

\bibitem{GenVoronoiTheoryDSV}
M.~Dutour~Sikiri{\'c}, A.~Sch{\"u}rmann, and F.~Vallentin, \emph{A
  generalization of {V}oronoi's reduction theory and its application}, Duke
  Math. J. \textbf{142} (2008), no.~1, 127--164.

\bibitem{Edelsbrunner_book}
H.~Edelsbrunner, \emph{Geometry and topology for mesh generation}, Cambridge
  Monographs on Applied and Computational Mathematics, vol.~7, Cambridge
  University Press, Cambridge, 2006, Reprint of the 2001 original.

\bibitem{eg-2002}
P.~Engel and V.~Grishukhin, \emph{There are exactly {$222$} {$L$}-types of
  primitive five-dimensional lattices}, European J. Combin. \textbf{23} (2002),
  275--279.

\bibitem{GrigisDelaunay}
A.~Grigis, \emph{Triangulation de {D}elaunay et triangulation des tores}, Geom.
  Dedicata \textbf{143} (2009), 81--88.

\bibitem{ItohAcute}
J.-i. Itoh and L.~Yuan, \emph{Acute triangulations of flat tori}, European J.
  Combin. \textbf{30} (2009), no.~1, 1--4.

\bibitem{rb-1976}
S.~Ryshkov and E.~Baranovskii, \emph{{$C$}-types of {$n$}-dimensional lattices
  and {$5$}-dimensional primitive parallelohedra (with application to the
  theory of coverings)}, Proc. Steklov Inst. Math. (1978), 140 pages,
  Translation of Tr. Mat. Inst. Steklova 137 (1976).

\bibitem{LocalCoveringOptimality}
A.~Sch{\"u}rmann and F.~Vallentin, \emph{Local covering optimality of lattices:
  {L}eech lattice versus root lattice {$E_8$}}, Int. Math. Res. Not. (2005),
  no.~32, 1937--1955.

\bibitem{stogrin-1973}
M.~Stogrin, \emph{Regular {D}irichlet-{V}oronoi partitions for the second
  triclinic group}, Proc. Steklov Inst. Math. \textbf{123} (1975), 1--116,
  Translation from Tr. Mat. Inst. Steklova 123, 1--128 (1973).

\bibitem{VoronoiII}
G.~Voronoi, \emph{Nouvelles applications des param\`etres continus \`a la
  th\'eorie des formes quadratiques. {D}euxi\`eme {M}\'emoire. {R}echerches sur
  les parall\'ello\`edres primitifs.}, J. Reine Angew. Math \textbf{134}
  (1908), no.~1, 198--287.

\end{thebibliography}

\end{document}